\documentclass[11pt]{article}
\usepackage{amsmath}
\usepackage{amsfonts,amsthm,amssymb}
\usepackage{amsfonts}
\usepackage{graphicx}    
\usepackage{subcaption}  
\usepackage{epstopdf}
\usepackage{caption} 
\usepackage{indentfirst}
\usepackage{amssymb}
\usepackage{hyperref}
\newtheorem{lem}{Lemma}[section]
\newtheorem{thm}[lem]{Theorem}
\newtheorem{cor}[lem]{Corollary}
\newtheorem{conj}{Conjecture}

\newtheorem{Def}[lem]{Definition}
\newtheorem{Obs}[lem]{Observation}
\newtheorem{Cla}{Claim}

\allowdisplaybreaks[4]

\begin{document}
	
	\title{Extremal graphs with no subgraph admitting $k+1$ edge-disjoint spanning trees\footnote{The research is supported by Natural Science Foundation of Xinjiang Uygur Autonomous Region (2025D01E02) and National Natural Science Foundation of China (12261086).}}
	\author{Qinglin Wang, Yingzhi Tian\footnote{Corresponding author. E-mail: wqlxju@163.com (Q. Wang), tianyzhxj@163.com (Y. Tian).} \\
		{\small College of Mathematics and System Sciences, Xinjiang
			University, Urumqi, Xinjiang 830046, PR China}}
	\date{}
	\maketitle
	
	\begin{abstract}
	 A graph $G$ is $\tau_k$-maximal if $G$ contains no subgraph admitting $k+1$ edge-disjoint spanning trees, while the addition of any edge in the complement of $G$ yields a subgraph that admits $k+1$ edge-disjoint spanning trees. In this paper, we prove that for any integers $k\geq 1$ and $n\geq 2k+2$, every $\tau_k$-maximal graph of order $n$ satisfies $|E(G)|\leq (k+1)(n-1)-1$. Furthermore, we construct a family of $\tau_k$-maximal graphs on $n\ge 2k+2$ vertices that have exactly $(k+1)(n-1)-1$ edges, which establishes the tightness of the upper bound. Then we conjecture that every $\tau_k$-maximal graph on $n$ vertices has exactly $(k+1)(n-1)-1$ edges, and we verify the conjecture for the case $k=1$.
	\end{abstract}
	
	\medskip
	\noindent\textbf{Keywords:} Edge-disjoint spanning trees; Spanning tree packing number; $\tau_k$-maximal graph; Edge connectivity  
	
    \section{Introduction}\label{Se1}
	
	All graphs considered in this paper are \emph{finite, simple}, and \emph{undirected}. For terminology and notation not explicitly defined here, we refer the reader to Bondy and Murty~\cite{Bondy2008}. 
	
    Let $G$ be a graph with \emph{vertex set} $V(G)$ and \emph{edge set} $E(G)$. The \emph{edge connectivity} of $G$, denoted by $\kappa'(G)$, is the minimum number of edges whose removal disconnects $G$. The \emph{complement} of $G$, denoted by $G^c$, is the graph with vertex set $V(G^c)=V(G)$ and edge set $E(G^c)=\{uv:u,v\in V(G),~uv\notin E(G)\}$. 
	
	Given a subset $F\subseteq E(G^c)$, let $G+F$ denote the graph with vertex set $V(G)$ and edge set $E(G)\cup F$. In particular, we write $G+e$ for $G+\{e\}$. For a subset $X\subseteq V(G)$, $G[X]$ denotes the \emph{induced subgraph} of $G$ on $X$. For two vertex subsets $S$, $T\subseteq V(G)$, let $E_G[S,T]$ denote the set of edges of $G$ with one end-vertex in $S$ and the other in $T$. When $T=V(G)\setminus S$, we abbreviate $E_G[S,V(G)\setminus S]$ to $E_G(S)$. In particular, we write $E_G(v)$ for $E_G(\{v\})$.
	   
	For a graph $G$ and a partition $\mathcal{P}=\{P_1,\dots,P_t\}$ of $V(G)$, let $E_{\mathcal{P}}(G)$ denote the set of edges of $G$ joining vertices in distinct parts of $\mathcal{P}$. Define $G_{\mathcal{P}}$ as the graph with vertex set $\mathcal{P}$ and edge set $E_{\mathcal{P}}(G)$, where each edge connects the two parts of $\mathcal{P}$ that contain its end-vertices in $G$. In other words, $G_{\mathcal{P}}$ is the graph obtained from $G$ by contracting each member of $\mathcal{P}$ to a single vertex.
	                                
	For a graph $G$, a \emph{spanning subgraph} of $G$ is a subgraph $H$ with $V(H)=V(G)$ and $E(H)\subseteq E(G)$. A \emph{spanning tree} of $G$ is a spanning subgraph that is a tree. A family of subgraphs of $G$ is \emph{edge-disjoint} if no two of them share an edge. The \emph{spanning tree packing number} of a graph $G$, denoted by $\tau(G)$, is the maximum number of edge-disjoint spanning trees contained in $G$. 
	
    The study of spanning tree packing number is of fundamental importance in graph theory. It has wide applications in network design for enhancing reliability and transmission efficiency~\cite{Itai1988}, and is closely related to several combinatorial properties of graphs, such as rigidity~\cite{Tay1984}, circular flow~\cite{Lai2007} and collapsibility~\cite{Catlin1988}.  
    
    A cornerstone result in this field is the celebrated characterization of graphs admitting at least $k$ edge-disjoint spanning trees, which was independently introduced by Nash-Williams~\cite{Nash1961} and Tutte~\cite{Tutte1961}, and serves as a key tool in our investigation.
    
    \begin{thm}[Nash-Williams~\cite{Nash1961} and Tutte~\cite{Tutte1961}]\label{NT}
    	A graph $G$ admits $k$ edge-disjoint spanning trees if and only if $|E(G_{\mathcal{P}})| \geq k(|\mathcal{P}|-1)$ for every partition $\mathcal{P}$ of $V(G)$. 
    \end{thm}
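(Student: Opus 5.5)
The statement is the Nash-Williams--Tutte theorem (Theorem~\ref{NT}). I would prove it through matroid union. Necessity is direct. Suppose $T_1,\dots,T_k$ are edge-disjoint spanning trees and $\mathcal{P}$ is any partition of $V(G)$. Contracting the parts of $\mathcal{P}$ sends each $T_i$ to a connected spanning subgraph of $G_{\mathcal{P}}$. Such a subgraph has at least $|\mathcal{P}|-1$ edges. These images are edge-disjoint, since each edge of $E_{\mathcal{P}}(G)$ lies in at most one $T_i$. Hence $|E(G_{\mathcal{P}})|\ge k(|\mathcal{P}|-1)$.

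For sufficiency, let $M$ be the cycle matroid of $G$, with rank function $r(F)=n-c(F)$, where $c(F)$ is the number of components of the spanning subgraph $(V(G),F)$. Admitting $k$ edge-disjoint spanning trees is equivalent to the union matroid $M^{(k)}=M\vee\cdots\vee M$ having rank $k(n-1)$. If $k$ disjoint forests together have $k(n-1)$ edges, each has exactly $n-1$ edges and is therefore a spanning tree. The matroid union theorem (Nash-Williams/Edmonds) gives
\[
r_{M^{(k)}}(E)=\min_{F\subseteq E(G)}\bigl(|E(G)\setminus F|+k\,r(F)\bigr).
\]
So it suffices to show that $|E\setminus F|+k(n-c(F))\ge k(n-1)$ for every $F$.

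Take any $F$ and let $\mathcal{P}$ be the partition of $V(G)$ into the vertex sets of the components of $(V(G),F)$. Then $|\mathcal{P}|=c(F)$. Every edge of $E_{\mathcal{P}}(G)$ joins different components of $F$, so it is not in $F$; thus $E_{\mathcal{P}}(G)\subseteq E\setminus F$. The hypothesis now gives
\[
|E\setminus F|\ge |E(G_{\mathcal{P}})|\ge k(c(F)-1),
\]
and hence $|E\setminus F|+k(n-c(F))\ge k(n-1)$, as required.

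The main obstacle is the matroid union rank formula itself. If it may not be quoted, I would prove it by the standard augmenting-path argument. Take a maximum-size family of $k$ edge-disjoint forests $F_1,\dots,F_k$ and suppose their union is not everything. Build an exchange digraph on edges: an arc $e\to f$ when $f\in F_i$ and $F_i-f+e$ is a forest. From an edge in no forest, either some path reaches an edge that can be added to some $F_i$ directly, which allows a sequence of swaps along a shortest path that enlarges the union (a contradiction), or it does not. In the latter case, let $F$ be the set of edges reachable from the uncovered edges. One checks that each $F_i\cap F$ spans $F$ in $M$. This gives $\sum_i|F_i|=|E\setminus F|+k\,r(F)$, which matches the minimum. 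The delicate point is verifying that swaps along a shortest path keep every $F_i$ acyclic, and this is the step I would check most carefully.
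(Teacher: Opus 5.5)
Your proof is correct. The paper gives no proof of its own to compare against: Theorem~\ref{NT} is quoted from Nash-Williams and Tutte and used as a black box. Its only use is in Lemma~\ref{size-bound}, and there only the sufficiency direction is needed, in contrapositive form.

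\textbf{Necessity.} Your argument is the standard elementary one. The image of each $T_i$ in $G_{\mathcal{P}}$ is connected and spanning, so it has at least $|\mathcal{P}|-1$ edges, and the images are edge-disjoint.

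\textbf{Sufficiency.} The graph-specific step is right. Taking $\mathcal{P}$ to be the vertex sets of the components of $(V(G),F)$ gives $|\mathcal{P}|=c(F)$ and $E_{\mathcal{P}}(G)\subseteq E\setminus F$. Together with the hypothesis, this shows that $\min_F\bigl(|E\setminus F|+k\,r(F)\bigr)\ge k(n-1)$. The equivalence between rank $k(n-1)$ in $M^{(k)}$ and having $k$ edge-disjoint spanning trees is also handled correctly. An independent set of $M^{(k)}$ is a union of $k$ forests, and these can be made disjoint by discarding overlaps.

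\textbf{What the route buys and costs.} The gain is brevity and generality. The same computation shows that the maximum size of a union of $k$ forests is a minimum over partitions, and this is the now-standard textbook proof. The cost is that all the difficulty moves into the matroid union theorem, which is of depth comparable to the statement itself. Citing it is at the same level of rigor as the paper's citation of Nash-Williams and Tutte.

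\textbf{The self-contained fallback.} Your augmenting-path sketch is the usual matroid-partition argument. Its second half is fine: for $e\in F\setminus F_i$, the fundamental cycle of $e$ in $F_i$ consists of out-neighbours of $e$, so it lies in $F$ and $F_i\cap F$ spans $F$. However, the shortest-path exchange lemma is not proved in your sketch, and it is exactly the nontrivial content. This includes the case where the final insertion goes into a forest already modified earlier along the path, which is usually handled by adding a sink vertex. So your proof is complete only if the matroid union theorem may be quoted.
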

    
     The spanning tree packing number is also closely associated with edge-connectivity. Kundu~\cite{Kundu1974} established the inequalities $\lceil\frac{\kappa'(G)-1}{2}\rceil\leq \tau(G)\leq \kappa'(G)$. Building on this relationship, Gusfield~\cite{Gusfield1983} later obtained a simpler sufficient condition in terms of edge connectivity for the existence of $k$ edge-disjoint spanning trees. For a comprehensive survey on the spanning tree packing number, we refer the reader to Palmer~\cite{Palmer2001}.
     
     \begin{thm}[Gusfield~\cite{Gusfield1983}]\label{GT}
     	If $G$ is a graph with $\kappa'(G) \geq 2k$, then $G$ admits at least $k$ edge-disjoint spanning trees.
     \end{thm}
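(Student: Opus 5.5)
We would derive Gusfield's theorem (Theorem~\ref{GT}) from the Nash-Williams--Tutte criterion (Theorem~\ref{NT}). So fix a graph $G$ with $\kappa'(G)\geq 2k$ and an arbitrary partition $\mathcal{P}=\{P_1,\dots,P_t\}$ of $V(G)$. It then suffices to verify
\[
|E(G_{\mathcal{P}})|\geq k(t-1).
\]
If $t=1$ the inequality is trivial, so assume $t\geq 2$.

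The key step is a degree-counting argument in the contracted graph $G_{\mathcal{P}}$. For each part $P_i$ with $t\geq 2$, the set $P_i$ is a nonempty proper subset of $V(G)$. Hence $E_G(P_i)$ is an edge cut of $G$, and
\[
|E_G(P_i)|\geq \kappa'(G)\geq 2k.
\]
The degree of the vertex $P_i$ in $G_{\mathcal{P}}$ is exactly $|E_G(P_i)|$. This is because the edges leaving $P_i$ are precisely the edges of $E_{\mathcal{P}}(G)$ incident with $P_i$. Also, each edge of $E_{\mathcal{P}}(G)$ joins two distinct parts, so it is counted exactly twice. Summing over all parts therefore gives
\[
2|E(G_{\mathcal{P}})|=\sum_{i=1}^{t}|E_G(P_i)|\geq 2kt,
\]
so $|E(G_{\mathcal{P}})|\geq kt\geq k(t-1)$.

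Since $\mathcal{P}$ was arbitrary, Theorem~\ref{NT} yields $k$ edge-disjoint spanning trees in $G$.

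There is no real obstacle. The only points needing care are that each part is a proper nonempty subset when $t\geq 2$, so that the edge-connectivity bound applies to it, and that the handshake count in $G_{\mathcal{P}}$ is correct: $G_{\mathcal{P}}$ may be a multigraph, but it has no loops because edges inside a part are discarded.
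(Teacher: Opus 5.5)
Your proof is correct. When $t\ge 2$, each part $P_i$ is a nonempty proper subset, so $|E_G(P_i)|\ge 2k$. The handshake identity $2|E(G_{\mathcal{P}})|=\sum_{i=1}^{t}|E_G(P_i)|\ge 2kt$ then gives the Nash-Williams--Tutte condition of Theorem~\ref{NT} with room to spare. The paper gives no proof of this result and simply quotes it from Gusfield; your derivation from Theorem~\ref{NT} is the standard argument for it.
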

     
    In this paper, we are concerned with the graph parameter $\bar{\tau}(G)=\max\{\tau(H): H\subseteq G\}$. Observe that $\bar{\tau}(G)$ is monotone with respect to edge addition: \[\bar{\tau}(G)\leq\bar{\tau}(G+e)\leq \bar{\tau}(G)+1.\]
    A central theme in extremal graph theory is the study of graphs that are maximal with respect to a given monotone graph property. Formally, a graph is called maximal with respect to a property $\mathcal{Q}$ if it does not satisfy $\mathcal{Q}$, but the addition of any new edge creates a graph that does. Determining the extremal size of such maximal graphs is a classic extremal problem. In this paper, we investigate the sizes of maximal graphs with respect to $\bar{\tau}(G)$.
	
	Let $k$ be a positive integer. A graph $G$ is \emph{$\tau_k$-maximal} if $\bar{\tau}(G)\leq k$, but for any edge $e\in E(G^c)$, we have $\bar{\tau}(G+e)\geq k+1$. Since $\tau(K_n)=\lfloor\frac{n}{2}\rfloor$, it follows that $\bar{\tau}(K_n)\leq k$ for all $n\leq 2k+1$. Therefore, we note that $K_n$ with $n\leq 2k+1$ is $\tau_k$-maximal. The main goal of this paper is to determine the size of $\tau_k$-maximal graphs of order $n\geq 2k+2$.
	
	The remainder of this paper is organized as follows. Section~\ref{Se2} presents several properties of $\tau_k$-maximal graphs. In Section~\ref{Se3}, we give a universal upper bound on the size of $\tau_k$-maximal graphs. Then we construct a family of $\tau_k$-maximal graphs with prescribed edge connectivity that achieves this bound. Motivated by the construction and the upper bound, we propose a conjecture on the size of general $\tau_k$-maximal graphs and verify this conjecture for the case $k=1$.
	
    \section{Properties of $\tau_k$-maximal graphs}\label{Se2}	

    In this section, we present several properties of $\tau_k$-maximal graphs that will be used throughout the remainder of this paper. We start with an upper bound on the size of graphs satisfying $\bar{\tau}(G)<k$.
    
	\begin{lem}\label{size-bound}
		Let $k \geq 2$ and $n \geq 2$ be integers, and let $G$ be a graph of order $n$. If $\bar{\tau}(G) < k$, then $|E(G)| < k(n-1)$.
	\end{lem}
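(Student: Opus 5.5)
We prove the contrapositive: if $|E(G)|\ge k(n-1)$, then some subgraph $H\subseteq G$ has $\tau(H)\ge k$, so that $\bar{\tau}(G)\ge k$. The natural route is induction on $n$, using Theorem~\ref{NT} to produce a violated partition and then passing to a smaller part.

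For $n=2$ the graph has at most one edge, and $k(n-1)=k\ge 2>1$. So $|E(G)|<k(n-1)$ holds automatically, and the base case is vacuous. More generally, a graph on $m$ vertices has at most $\binom{m}{2}$ edges, and $\binom{m}{2}<k(m-1)$ whenever $m<2k$. Hence small graphs never reach the threshold.

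For the inductive step, let $|V(G)|=n$ and $|E(G)|\ge k(n-1)$.

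\begin{itemize}
\item If $\tau(G)\ge k$, we are done with $H=G$.
\item Otherwise Theorem~\ref{NT} gives a partition $\mathcal{P}=\{P_1,\dots,P_t\}$ with $t\ge 2$ and $|E(G_{\mathcal{P}})|<k(t-1)$.
\end{itemize}

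Counting the edges inside the parts gives
\[
\sum_{i=1}^{t}|E(G[P_i])| = |E(G)|-|E(G_{\mathcal{P}})| > k(n-1)-k(t-1) = \sum_{i=1}^{t}k(|P_i|-1).
\]
So some part $P_i$ satisfies $|E(G[P_i])|>k(|P_i|-1)$. This forces $|P_i|\ge 2$, since a single vertex spans no edges and the right-hand side would be $0$. Also $|P_i|<n$, because $t\ge 2$.

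By the induction hypothesis applied to $G[P_i]$, in its contrapositive form, there is a subgraph $H\subseteq G[P_i]\subseteq G$ with $\tau(H)\ge k$. Hence $\bar{\tau}(G)\ge k$.

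The main obstacle is a detail of the inductive setup: the induction must be stated for all orders $m\ge 2$, with base case $m=2$ vacuous as noted above. We must also check that the chosen part has at least two vertices, which the strict inequality above guarantees. The hypothesis $k\ge 2$ is used only in the base case. For $k=1$ the statement would fail, since the bound $|E(G)|<n-1$ is false for a single edge on two vertices with $\bar{\tau}=1\not<1$; but that graph is excluded anyway by the hypothesis $\bar{\tau}(G)<k$. In any case the argument requires no further estimates.
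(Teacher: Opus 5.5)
Your proof is correct and is essentially the paper's argument in contrapositive form. Both induct on $n$ and take a Nash-Williams--Tutte violated partition $\mathcal{P}$ with $t\ge 2$. The paper sums the inductive bound $|E(G[P_i])|\le k(|P_i|-1)$ over all parts, while you pigeonhole to a single overfull part $P_i$ with $2\le |P_i|<n$.

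Your closing remark about $k=1$ is self-contradictory, but it plays no role in the proof. The example you give does not satisfy the hypothesis, so it is not a counterexample, and the statement does not actually fail for $k=1$.
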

	\begin{proof}
		We proceed by induction on $n$. For $n=2$, we have $|E(G)| \leq 1 <k =k(n-1)$, so the result holds trivially.
		
		Now assume $n \geq 3$ and that the result holds for all graphs of order fewer than $n$. Since $\tau(G) \leq \bar{\tau}(G) < k$, it follows from Theorem~\ref{NT} that there exists a partition $\mathcal{P}=\{P_1, P_2,\ldots, P_t\}$ of $V(G)$ such that \[
        |E(G_{\mathcal{P}})| < k(t-1).\]
		
		For each $i=1,2,\ldots,t$, let $|P_i|=n_i$. If $n_i=1$, then $|E(G[P_i])|=0=k(n_i-1)$. If $n_i\geq 2$, then $\bar{\tau}(G[P_i])\leq \bar{\tau}(G)<k$. By the induction hypothesis, we get \[|E(G[P_i])|<k(n_i-1).\] Consequently,
		\begin{align*}
			|E(G)|&=\sum_{i=1}^t |E(G[P_i])|+|E(G_{\mathcal{P}})| \\
            &< \sum_{i=1}^t k(n_i-1) + k(t-1) \\
			&= k(n-t)+k(t-1)\\
			&=k(n-1),
		\end{align*}  
		which completes the proof.
	\end{proof} 
	
	The converse of Lemma~\ref{size-bound}, namely that $|E(G)| < k(n-1)$ implies $\bar{\tau}(G) < k$, is false in general. Let $k\ge 2$ and $n\ge 2k+1$ be integers, and let $G = K_{2k} \cup (n-2k)K_1$. Then $|E(G)|=k(2k-1)<k(n-1)$, whereas $\bar{\tau}(G)\geq \tau(K_{2k})=k$, providing a counterexample.
	 
	Let $k=1$ and $n\geq 2$ be integers, and let $G$ be a simple graph of order $n$. If $|E(G)|\geq n-1$, then as $G$ is simple, $G$ admits at least one edge, which forms a spanning tree of a subgraph of its $2$-vertex induced subgraph. Thus $\bar{\tau}(G)\geq 1$. Combining this with the contrapositive of Lemma~\ref{size-bound}, we obtain the following corollary.
    
	\begin{cor}\label{size-sufficient}
	Let $k\geq 1$ and $n\geq 2$ be integers, and let $G$ be a graph of order $n$. If $|E(G)|\geq  k(n-1)$, then $\bar{\tau}(G) \geq k$.
	\end{cor}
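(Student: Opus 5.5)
The corollary follows by splitting on whether $k=1$ or $k\geq 2$. Each case reduces to something already established.

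\emph{Case $k\geq 2$.} We argue by contrapositive using Lemma~\ref{size-bound}. Suppose, for contradiction, that $|E(G)|\geq k(n-1)$ but $\bar{\tau}(G)<k$. Since $k\geq 2$ and $n\geq 2$, the hypotheses of Lemma~\ref{size-bound} are met. The lemma then gives $|E(G)|<k(n-1)$, which is a contradiction. Hence $\bar{\tau}(G)\geq k$.

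\emph{Case $k=1$.} Lemma~\ref{size-bound} is stated only for $k\geq 2$, so we argue directly. The hypothesis is $|E(G)|\geq n-1\geq 1$, so $G$ has at least one edge $uv$. The subgraph $H$ with $V(H)=\{u,v\}$ and $E(H)=\{uv\}$ is itself a tree, so $\tau(H)\geq 1$. By the definition of $\bar{\tau}$ as a maximum over subgraphs, $\bar{\tau}(G)\geq \tau(H)\geq 1$.

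There is no genuine obstacle. The only point needing care is that Lemma~\ref{size-bound} excludes $k=1$, which is why that case is handled separately. The exclusion is needed because for $k=1$ a graph on $n\geq 2$ vertices with no edges has $0<n-1$ edges, so the lemma's argument could not rely on the trivial base case $|E(G)|\leq 1<k$ used for $n=2$.
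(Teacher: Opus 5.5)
Your proof is correct and matches the paper's argument: the contrapositive of Lemma~\ref{size-bound} for $k\ge 2$, and for $k=1$ a single edge that is a spanning tree of a two-vertex subgraph. One quibble: your closing justification is muddled, because the lemma's conclusion still holds for $k=1$ (since $\bar{\tau}(G)<1$ forces $G$ to be edgeless), and what breaks down is only the base-case inequality $|E(G)|\le 1<k$ in its proof.
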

	
	The following lemma establishes bounds on the edge connectivity of $\tau_k$-maximal graphs.
	
	\begin{lem}\label{edge connectivity}
		Let $k\geq 1$ and $n\geq 2k+2$ be integers. If $G$ is a $\tau_k$-maximal graph of order $n$, then $k+1 \leq \kappa'(G) \leq 2k+1$. 
	\end{lem}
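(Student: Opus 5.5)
Both inequalities follow from the definition of $\tau_k$-maximality together with Theorem~\ref{GT} and Theorem~\ref{NT}. I treat the upper bound first, since it is immediate.

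\emph{Upper bound.} Suppose $\kappa'(G)\geq 2k+2=2(k+1)$. By Theorem~\ref{GT}, $G$ has $k+1$ edge-disjoint spanning trees. Hence $\bar{\tau}(G)\geq\tau(G)\geq k+1$, contradicting $\bar{\tau}(G)\leq k$. So $\kappa'(G)\leq 2k+1$.

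\emph{Lower bound.} Suppose $\kappa'(G)\leq k$, and let $S\subseteq V(G)$ be a nonempty proper subset with $|E_G(S)|=\kappa'(G)\leq k$. Put $T=V(G)\setminus S$.

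First, some pair $u\in S$, $v\in T$ is nonadjacent. Otherwise $|E_G(S)|=|S||T|\geq n-1\geq 2k+1>k$, since $|S|+|T|=n$ with both parts nonempty. Let $e=uv\in E(G^c)$.

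By maximality, $G+e$ contains a subgraph $H$ with $\tau(H)\geq k+1$. Since $\bar{\tau}(G)\leq k$, the subgraph $H$ must use $e$. Therefore $V(H)$ meets both $S$ and $T$.

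Consider the partition $\{V(H)\cap S,\ V(H)\cap T\}$ of $V(H)$. Both parts are nonempty. The edges of $H$ crossing this partition lie in $E_G(S)\cup\{e\}$, so there are at most $k+1$ of them.

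Theorem~\ref{NT} only requires $k+1$ crossing edges for a two-part partition, so this alone gives no contradiction. I need to use the fact that removing a crossing edge leaves each tree's crossing count intact.

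Each of the $k+1$ edge-disjoint spanning trees of $H$ must contain at least one crossing edge. There are at most $k+1$ crossing edges in total. Hence there are exactly $k+1$ crossing edges, namely all of $E_G(S)$ together with $e$, and each tree contains exactly one of them.

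Now vary the choice of the missing edge. Take any other nonadjacent pair $u'\in S$, $v'\in T$ and apply the same argument. This shows $|E_G(S)|=k$.

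The main obstacle is turning this tight structure into a contradiction. My first attempt is a counting argument. The tree using $e$ restricts to spanning trees of $H\cap S$ and $H\cap T$. Likewise, the other trees restrict to spanning trees of $H[V(H)\cap S]$ and $H[V(H)\cap T]$, because each tree has exactly one crossing edge, and deleting it splits the tree into spanning trees of the two sides.

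So at least one side has at least two vertices, and $H[V(H)\cap S]$, which is a subgraph of $G$, contains $k+1$ edge-disjoint spanning trees. This gives $\bar{\tau}(G)\geq k+1$, a contradiction, provided $|V(H)\cap S|\geq 2$.

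If instead $|V(H)\cap S|=|V(H)\cap T|=1$, then $H$ is a single edge. Then $\tau(H)=1<k+1$, which is impossible since $k\geq 1$.

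Hence $\kappa'(G)\geq k+1$. This restriction argument makes the earlier step fixing $|E_G(S)|=k$ unnecessary. The only point to double-check is that each restricted tree really spans its side, and that holds because each tree uses exactly one crossing edge.
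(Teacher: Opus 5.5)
Your proof is correct and follows essentially the paper's argument: Gusfield's theorem gives the upper bound, and the lower bound goes via a non-edge $e$ across a minimum cut, a subgraph $H\subseteq G+e$ with $k+1$ edge-disjoint spanning trees that must use $e$, the count showing each tree uses exactly one crossing edge, and restriction to a side with at least two vertices to get $\bar{\tau}(G)\geq k+1$. The paragraph varying the missing edge is superfluous, since the counting already forces $|E_G(S)|=k$; you leave implicit the case where only $V(H)\cap T$ has two or more vertices, which follows by symmetry, matching the paper's ``without loss of generality''.
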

	
	\begin{proof}
		We prove that $\kappa'(G)\leq 2k+1$ by contradiction. Suppose that $\kappa'(G)\geq 2k+2=2(k+1)$. Then by Theorem~\ref{GT}, $\tau(G)\geq k+1$, contradicting the definition of a $\tau_k$-maximal graph. Hence $\kappa'(G)\leq 2k+1$.
		
		It remains to prove that $\kappa'(G)\geq k+1$. Since $G$ is $\tau_k$-maximal, we have $\bar{\tau}(G)\leq k$. If $G$ is a complete graph, then $\lfloor \frac{n}{2}\rfloor = \tau(G) \le \bar{\tau}(G) \leq k$, which implies $n\leq 2k+1$, a contradiction. Consequently, $G$ is not complete.
		
	    Suppose that $\kappa'(G)=t\leq k$. Then there exists an edge set $E_{t}=\{e_1,\ldots, e_t\}$ such that $G-E_t$ is disconnected. Let $G_1$ be a component of $G-E_t$ and $G_2=G-V(G_1)$. Since $|V(G_1)|\cdot|V(G_2)|\geq n-1\geq 2k+1 > k \geq t$, we have $E_{G^c}[V(G_1), V(G_2)]\neq \emptyset$. 
        
        Pick an arbitrary edge $e_0\in E_{G^c}[V(G_1),V(G_2)]\subseteq E(G^c)$. Since $G$ is $\tau_k$-maximal, there exists a subgraph $H\subseteq G+e_0$ such that $\tau(H)=\bar{\tau}(G+e_0)\geq k+1$ and $e_0 \in E(H)$. Therefore, $H$ contains $k+1$ edge-disjoint spanning trees, denoted by $T_0, T_1, \ldots, T_k$.
        
        Since every spanning tree $T_i$ contains at least one edge belonging to $E_t\cup \{e_0\}$, and these spanning trees are pairwise edge-disjoint, we have $t+1\geq k+1$, which forces $t\ge k$. Together with $t\le k$, we obtain $t=k$. Consequently, $H$ contains all edges of $E_k\cup\{e_0\}$, and each $T_i$ contains exactly one of them, where $i=1,2,\ldots, k$.
        
        Without loss of generality, assume $e_i \in E(T_i)$ for all $i=0, 1,\ldots, k$. Let $V_1=V(H)\cap V(G_1)$ and $V_2=V(H)\cap V(G_2)$. Then one of $V_1$ and $V_2$ contains at least two vertices. Assume, without loss of generality, $|V_1|\geq 2$. Thus, for $i=0, 1,\ldots, k$, each $T_i[V_1]$ is a spanning tree of $G[V_1]$. This forces $\tau(G[V_1])\geq k+1$. Since $G[V_1]$ is a subgraph of $G$, we have $\bar{\tau}(G)\geq \tau(G[V_1])\geq k+1$, which contradicts $\bar{\tau}(G)\leq k$. Therefore, $\kappa'(G)\ge k+1$.
    \end{proof}
    
    In particular, if $G$ is $\tau_1$-maximal, then $2\le \kappa'(G)\le 3$. The next two lemmas characterize the structure of $\tau_1$-maximal graphs with respect to their minimum edge cuts.
    
    \begin{lem}\label{edge conn-2}
    	Let $G$ be a $\tau_1$-maximal graph of order $n\geq 3$ with edge connectivity $\kappa'(G)=2$. Let $F$ be a minimum edge cut of $G$, and let $G_1$ and $G_2$ be the two components of $G-F$. Then exactly one of $G_1$ and $G_2$ is isomorphic to $K_1$, and the other is $\tau_1$-maximal.
    \end{lem}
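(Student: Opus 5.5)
The plan is a counting argument on a subgraph $H$ produced by maximality, combined with a leaf-removal trick at a low-degree vertex. Write $F=\{e_1,e_2\}$, $V_1=V(G_1)$ and $V_2=V(G_2)$. Since $F$ is a minimum cut, $G-F$ has exactly two components. As $n\ge 3$, $G_1$ and $G_2$ cannot both be $K_1$. So it suffices to show that $|V_1|\ge 2$ and $|V_2|\ge 2$ cannot hold simultaneously, and then that the larger side is $\tau_1$-maximal.

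\emph{Step 1 (the key structural claim).} Let $e_0=uv\in E(G^c)$ with $u\in V_1$, $v\in V_2$. By $\tau_1$-maximality there is $H\subseteq G+e_0$ with $\tau(H)\ge 2$. Put $V_i'=V(H)\cap V_i$. Then $e_0\in E(H)$, since otherwise $\bar\tau(G)\ge 2$. Moreover $H$ is connected, so $V_1',V_2'\neq\emptyset$, and the edges of $H$ between $V_1'$ and $V_2'$ lie in $\{e_0,e_1,e_2\}$.

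Suppose $|V_1'|,|V_2'|\ge 2$. Each $H[V_i']\subseteq G$ has $\bar\tau<2$, so Lemma~\ref{size-bound} with $k=2$ gives $|E(H[V_i'])|\le 2(|V_i'|-1)-1$. Summing,
\[
|E(H)|\le 2|V(H)|-6+3<2(|V(H)|-1),
\]
which contradicts $\tau(H)\ge 2$. Hence one side, say $V_j'$, is a single vertex $w$, and all edges of $H$ at $w$ lie in $\{e_0,e_1,e_2\}$.

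Next I use the leaf-removal observation. If $w$ has degree exactly $2$ in $H$, then $w$ is a leaf of each of the two edge-disjoint spanning trees. Deleting $w$ therefore leaves two edge-disjoint spanning trees of $H-w$. Since $e_0$ is incident to $w$, we have $H-w\subseteq G$, giving $\bar\tau(G)\ge 2$ (note $|V(H)|\ge 3$ because $H$ is simple with two disjoint spanning trees). This is a contradiction. Thus $w$ has $H$-degree $3$, so $w$ is an endpoint of $e_0$ and of both $e_1$ and $e_2$.

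\emph{Step 2.} Suppose $|V_1|,|V_2|\ge 2$. In each $V_i$ at most one vertex is incident to both edges of $F$. Choose $u\in V_1$ and $v\in V_2$ that are not such vertices and with $uv\notin F$. This is possible: if the only candidates form an $F$-edge, then $|V_1|=|V_2|=2$ and a short case check on the four cross pairs, of which at most two are edges, finds a non-edge avoiding the double endpoints. Applying Step 1 to $e_0=uv$ yields a contradiction. I expect this selection to be the fiddliest point, and would handle it by exhausting the small cases $|V_i|=2$ directly.

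\emph{Step 3.} Say $G_1=K_1=\{w\}$, so $d_G(w)=2$. First, $\bar\tau(G_2)\le\bar\tau(G)\le 1$, and $|V_2|\ge 2$. Now let $e\in E(G_2^c)$, so that $G+e$ contains some $H$ with $\tau(H)\ge 2$. If $w\in V(H)$, then $w$ has $H$-degree exactly $2$, hence is a leaf of both trees, and can be removed as in Step 1. The resulting graph lies in $G_2+e$ and still has two edge-disjoint spanning trees. Hence $\bar\tau(G_2+e)\ge 2$, so $G_2$ is $\tau_1$-maximal.
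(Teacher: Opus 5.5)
Your proof is correct and has the same skeleton as the paper's. You use a Lemma~\ref{size-bound} count on the two sides of the cut to exclude $|V(G_1)|,|V(G_2)|\ge 2$. You then delete the degree-$2$ vertex, which is a leaf of both trees, to show the other side is $\tau_1$-maximal.

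Where you differ, you are more careful than the paper. The paper takes an arbitrary cross non-edge $e$ and applies Lemma~\ref{size-bound} to both $H[V(H)\cap V(G_i)]$ without checking that each has at least two vertices. For a single vertex the claimed bound $2\cdot 1-3=-1$ is false. Suppose $H$ meets one side only in a vertex $w$ that is incident in $H$ to $e$ and to both edges of $F$. Then the correct count gives only $|E(H)|\le 2|V(H)|-2$, which is no contradiction. Your Step~1 isolates exactly this configuration, and your Step~2 rules it out through the choice of $e_0$. So your version closes a gap in the paper's argument.

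The selection in Step~2 is easier than you fear, and no case check is needed:
\begin{itemize}
\item If some $x\in V_1$ is incident to both edges of $F$, then no other vertex of $V_1$ meets $F$. Any $u\in V_1\setminus\{x\}$ and any $v\in V_2$ not incident to both edges of $F$ then give a suitable non-edge. The case of such a vertex in $V_2$ is symmetric.
\item If neither side has such a vertex, there are $|V_1||V_2|\ge 4>|F|$ cross pairs to choose from.
\end{itemize}

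Two points you leave implicit are routine. First, $d_H(w)\ge 2$ because each tree uses an edge at $w$. Second, in Step~3 the case $w\notin V(H)$ gives $H\subseteq G_2+e$ directly.
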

	
	\begin{proof}
		Firstly, we prove that exactly one of $G_1$ and $G_2$ is isomorphic to $K_1$. Suppose that $|V(G_1)|\geq 2$ and $|V(G_2)|\geq 2$. Since $\kappa'(G)=2$, we have $|F|=2$. Combining this with $|V(G_1)|\cdot |V(G_2)|>2$, we obtain $E_{G^c}[V(G_1), V(G_2)]\neq \emptyset$.
		
		Pick an edge $e\in E_{G^c}[V(G_1), V(G_2)]\subseteq E(G^c)$. Since $G$ is $\tau_1$-maximal, there exists a subgraph $H\subseteq G+e$ such that $\tau(H)=\bar{\tau}(G+e)\geq 2$ and $e\in E(H)$. Let $H_1=H[V(H)\cap V(G_1)] \subseteq G$ and $H_2=H[V(H)\cap V(G_2)]\subseteq G$. Since $G$ is $\tau_1$-maximal, we have $\bar{\tau}(H_1)\leq \bar{\tau}(G)\leq 1<2$ and $\bar{\tau}(H_2)\leq \bar{\tau}(G)\leq 1<2$. 
		
		By Lemma~\ref{size-bound}, we get $|E(H_1)|\leq 2|V(H_1)|-3$ and $|E(H_2)|\leq 2|V(H_2)|-3$. Thus
			\begin{align*}
				|E(H)|& \leq |E(H_1)|+|E(H_2)|+|F\cup\{e\}|\\
				&\leq 2|V(H_1)|-3+2|V(H_2)|-3+3\\
				&=2|V(H)|-3.
			\end{align*}
		Since $\tau(H)\geq 2$, it follows that $|E(H)|\geq 2|V(H)|-2$, a contradiction. Hence, at least one of $G_1$ and $G_2$ is a single vertex. Since $n\geq 3$, we have exactly one of $G_1$ and $G_2$ is isomorphic to $K_1$.
		
		Without loss of generality, assume $V(G_1)=\{u\}$. It remains to show that $G_2$ is $\tau_1$-maximal. Since $G_2\subseteq G$ and $G$ is $\tau_1$-maximal, we have $\bar{\tau}(G_2)\leq \bar{\tau}(G) \leq 1$. If $G_2$ is a complete graph, then it is trivially $\tau_1$-maximal. If $G_2$ is not complete, then as $G$ is $\tau_1$-maximal, there exists a subgraph $H'\subseteq G+e$ such that $\tau(H')=\bar{\tau}(G+e)\geq 2$ for any edge $e\in E(G_2^c)$. 
		
		Let $T_1$ and $T_2$ be two edge-disjoint spanning trees of $H'$. If $u\notin V(H')$, then $H'\subseteq G_2+e$, and so $G_2$ is $\tau_1$-maximal. If $u\in V(H')$, then by $d_{H'}(u)\leq d_{G+e}(u)=2$, $u$ is a leaf in both $T_1$ and $T_2$. Therefore, $T_1-u$ and $T_2-u$ are also two edge-disjoint spanning trees of $H'-u\subseteq G_2+e$. Hence, $\bar{\tau}(G_2+e)\geq \tau(H'-u)=2$. Together with $\bar{\tau}(G_2)\le 1$, we obtain $G_2$ is $\tau_1$-maximal. 
	\end{proof}
	
	\begin{lem}\label{edge conn-3}
		Let $G$ be a $\tau_1$-maximal graph of order $n\geq 4$ with edge connectivity $\kappa'(G)=3$. Then there exists a vertex $u\in V(G)$ with $d_{G}(u)=3$. Furthermore, there is an edge $e_0 \in E(G_2 ^c)$ such that $G_2+e_0$ is $\tau_1$-maximal, where $G_2=G-u$.
	\end{lem}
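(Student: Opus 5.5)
The plan has two parts: first locate the degree-$3$ vertex by an edge count, then transfer maximality from $G$ to $G-u$ plus a suitable edge by a ``splitting off'' argument at $u$.

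\textbf{Step 1: a vertex of degree 3.} Since $\kappa'(G)=3$, every vertex has degree at least $3$. Because $\bar{\tau}(G)\le 1<2$, Lemma~\ref{size-bound} with $k=2$ gives $|E(G)|\le 2(n-1)-1=2n-3$. If every vertex had degree at least $4$, then $|E(G)|\ge 2n$, which is a contradiction. Hence some vertex $u$ has $d_G(u)=3$. Let $N_G(u)=\{a,b,c\}$ and $G_2=G-u$.

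\textbf{Step 2: a splitting-off tool.} Let $H$ be any subgraph of a graph $G'\supseteq G$ with two edge-disjoint spanning trees $T_1,T_2$, where $u\in V(H)$ and $d_{G'}(u)=3$. Then $d_H(u)\in\{2,3\}$.
\begin{itemize}
\item If $d_H(u)=2$, then $u$ is a leaf of both trees, and deleting it leaves two edge-disjoint spanning trees of $H-u$.
\item If $d_H(u)=3$, then $u$ is a leaf of one tree, say $T_1$, and has degree $2$ in $T_2$, with $T_2$-neighbours $x,y\in\{a,b,c\}$. Replacing the path $xuy$ by the edge $xy$ yields two edge-disjoint spanning trees of $H-u+xy$.
\end{itemize}
Conversely, suppose $H'\subseteq G_2+xy$ contains $xy$ and has two edge-disjoint spanning trees, where $xy\notin E(G)$ and $x,y\in N_G(u)$. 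Let $z$ be the third neighbour of $u$. Replace $xy$ in the tree containing it by the path $xuy$, and attach $u$ to the other tree by the edge $uz$; note that $z\in V(H')$ can be arranged, or $u$ can be hung on $x$ or $y$ if $z\notin V(H')$. This produces a subgraph of $G$ with $\tau\ge 2$, which is impossible. Hence $\bar{\tau}(G_2+xy)\le 1$ for every non-edge $xy$ of $G$ inside $\{a,b,c\}$.

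\textbf{Step 3: choosing $e_0$.} I would take $e_0$ to be a missing pair among $a,b,c$, so that $\bar{\tau}(G_2+e_0)\le 1$ by Step~2. For maximality, let $f$ be a non-edge of $G_2+e_0$. By $\tau_1$-maximality of $G$ there is $H\subseteq G+f$ with $\tau(H)\ge 2$. Step~2 then gives a subgraph with $\tau\ge 2$ inside $G_2+f+xy$ for some pair $xy\subseteq\{a,b,c\}$. This lies in $G_2+e_0+f$ whenever $xy\in E(G)\cup\{e_0\}$.

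\textbf{Main obstacle.} Two situations remain, and these are the crux.
\begin{itemize}
\item \emph{Several pairs of $\{a,b,c\}$ are missing}, and the reduction produces a missing pair other than $e_0$. Here I would try to exploit the freedom of which tree has $u$ as a leaf: re-route by exchanging the $T_1$-edge $uz$ with an edge of $T_2$ at $u$, so that the pair produced becomes $e_0$ or an existing edge. Failing that, I would choose $e_0$ among the missing pairs by an extremal argument that uses Nash-Williams partitions (Theorem~\ref{NT}) on $G_2+e_0+f$.
\item \emph{$\{a,b,c\}$ is a triangle.} Then Step~2 shows that $G_2$ itself is $\tau_1$-maximal, so no $e_0$ can work. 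I would therefore rule this case out, or re-select $u$ among the degree-$3$ vertices. For the exclusion, I would combine the fact that $G_2$ is $\tau_1$-maximal with the edge count: adding back $u$ and its three edges to $G_2$ should create two edge-disjoint spanning trees via a triangle edge, contradicting $\bar{\tau}(G)\le 1$.
\end{itemize}
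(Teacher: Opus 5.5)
Your Step 1 is the paper's Claim 1. The rest of the plan has two genuine gaps.

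\textbf{The choice of $e_0$.} The converse half of your Step 2 is false, so you cannot take $e_0$ to be an arbitrary missing pair in $N_G(u)$. Suppose the witness $H'\subseteq G_2+xy$ avoids $z$. Then hanging $u$ on $x$ or $y$ in the second tree reuses $ux$ or $uy$, which the path $xuy$ already occupies. What remains has only $2(|V(H')|+1)-3$ edges, so there is no contradiction.

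Here is a concrete counterexample. Take vertices $x,y,p,q,u,z,z_2,z_3$ and edges $xp,xq,yp,yq,pq$, $ux,uy,uz$, $zz_2,zz_3,z_2z_3$, $pz_2,qz_3$. This graph has $13=2\cdot 8-3$ edges. $G[\{x,y,p,q,u\}]$ and the triangle $zz_2z_3$ each satisfy the bound below on all their vertex subsets, and they are joined by a $3$-edge matching. Hence every $S$ with $|S|\ge 2$ spans at most $2|S|-3$ edges, so $\bar{\tau}(G)\le 1$, and $G$ is $\tau_1$-maximal by Corollary~\ref{size-sufficient}. Also $\kappa'(G)=3$ and $d_G(u)=3$. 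Yet $xy\notin E(G)$ and $G-u+xy$ contains a $K_4$ on $\{x,y,p,q\}$, so $\bar{\tau}(G_2+xy)\ge 2$. Among the pairs in $N_G(u)$, only $xz$ and $yz$ are admissible here.

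What you need is an existence argument, which is the paper's Claim 2. Suppose every non-edge of $G_2$ is bad. Take a witness $H_1$ for a non-edge $v_1v_2$ in $N_G(u)$; such a pair exists because $u$ plus a triangle would be a $K_4$ and $\tau(K_4)=2$, which also disposes of your triangle case. When $v_3\notin V(H_1)$, bring in a second witness $H_2$ for a non-edge $v_3w$ with $w\in V(H_1)$, and count edges on $V(H_1)\cup V(H_2)\cup\{u\}$. (That count uses $|E(G[V(H_1)\cap V(H_2)])|\le 2|V(H_1)\cap V(H_2)|-3$, which needs the intersection to have at least two vertices. If the witnesses meet only in $w$, a third witness for another non-neighbour $w'$ of $v_3$ in $V(H_1)$ closes the case.)

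\textbf{Maximality of $G_2+e_0$.} This is the obstacle you leave open, and splitting off at $u$ does not close it. From a given pair of trees, the exchange you propose at $u$ reaches only two of the three pairs, so nothing forces the produced pair to be $e_0$. Moreover, if the produced pair $xy$ is an edge of $G$ lying in the tree where $u$ is a leaf, the split needs a parallel copy of $xy$. So even your case $xy\in E(G)$ fails in a simple graph.

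The missing idea is the paper's union--intersection count (Claim 3), which never looks at the pair:
\begin{enumerate}
\item[(a)] Fix any $e_0$ with $\bar{\tau}(G_2+e_0)\le 1$ and a witness $L_1\subseteq G+e_0$. Necessarily $d_{L_1}(u)=3$, since otherwise deleting $u$ gives a witness inside $G_2+e_0$.
\item[(b)] For a non-edge $f$ of $G_2+e_0$, take a witness $L_2\subseteq G+f$. If $d_{L_2}(u)\le 2$, delete $u$ and you are done.
\item[(c)] Otherwise $V(L_1)\cap V(L_2)\supseteq\{u\}\cup N_G(u)$. So Lemma~\ref{size-bound} bounds the edges inside the intersection by $2|V(L_1)\cap V(L_2)|-3$.
\item[(d)] Inclusion--exclusion then shows that $W=(V(L_1)\cup V(L_2))\setminus\{u\}$ spans at least $2|W|-4$ edges of $G_2$. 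Adding $e_0$ and $f$ gives at least $2|W|-2$ edges of $G_2+e_0+f$, and Corollary~\ref{size-sufficient} gives $\bar{\tau}(G_2+e_0+f)\ge 2$.
\end{enumerate}
This makes maximality automatic for every admissible $e_0$, wherever it lies, so you never need to control which pair a splitting produces.
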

	
	\begin{proof}
		We prove this lemma by establishing three claims sequentially.
		\begin{Cla}\label{cl21}
			There exists a vertex $u\in V(G)$ with $d_G(u)=3$. 
		\end{Cla}
		\noindent\emph{Proof of Claim~\ref{cl21}.} Suppose that $\delta(G)\geq 4$. Then $|E(G)|\geq \frac{\delta(G)|V(G)|}{2}=2n$. Since $G$ is $\tau_1$-maximal, it follows that $\bar{\tau}(G)\leq 1<2$. By Lemma~\ref{size-bound}, $|E(G)|<2n-2<2n$, a contradiction. Therefore, $\delta(G)\leq 3$. Together with $\delta(G) \geq \kappa'(G) =3$, we conclude that $\delta(G)=3$, so $G$ must contain a vertex of degree $3$. \hfill\qedsymbol 
		
		Let $u\in V(G)$ be a vertex with $d_{G}(u)=3$. Since $|E_G(u)|=3=\kappa'(G)$, $E_G(u)$ is a minimum edge cut of $G$, and $G-E_G(u)$ has exactly two components $G_1=G[\{u\}]$ and $G_2=G-u$.
		
		\begin{Cla}\label{cl22}
			There is an edge $e_0\in E(G_2^c)$ such that $\bar{\tau}(G_2+e_0)\leq 1$.
		\end{Cla}
		
		\noindent\emph{Proof of Claim~\ref{cl22}.} Let $N_G(u)=\{v_1,v_2,v_3\}$. Since $\bar{\tau}(G)\leq 1$ and $\tau(K_4)=2$, $G$ is $K_4$-free. Therefore, $G_2[\{v_1,v_2,v_3\}]$ is not a triangle. Without loss of generality, assume $v_1 v_2\notin E(G)$.
		
		Suppose that $\bar{\tau}(G_2+e)\geq 2$ for every $e\in E(G_2^c)$. Since $v_1 v_2\in  E(G_2^c)$, there exists a subgraph $H_1\subseteq G_2+v_1 v_2$ such that $\tau(H_1)\geq 2$ and $v_1v_2\in E(H_1)$. Because every tree of order $n\geq 2$ has exactly $n-1$ edges, we get $|E(H_1)|\geq 2|V(H_1)|-2$. Hence, we consider the following three cases.
		
		\noindent{\bf Case 1.} $v_3 \in V(H_1).$
		
		Let $H_1'=G[V(H_1)\cup \{u\}]\subseteq G$. Then 
		\begin{align*}
			|E(H_1')|&=|E(H_1)|-|\{v_1v_2\}|+|E_{H_1'}(u)|\\
			& \geq 2|V(H_1)|-2-1+3\\
			&= 2|V(H_1')|-2.
		\end{align*} 
		By Corollary~\ref{size-sufficient}, we get $\bar{\tau}(H_1')\geq 2$, and hence $\bar{\tau}(G) \geq \bar{\tau}(H_1')\geq 2$, which contradicts $\bar{\tau}(G)\leq 1$.

		\noindent{\bf Case 2.} $v_3\notin V(H_1)$ and $E_{G^c}[\{v_3\}, V(H_1)]=\emptyset$.
		
		Let $H_1''=G[V(H_1)\cup \{u,v_3\}]\setminus \{v_1 v_2\}\subseteq G$. Then 
		\begin{align*}
			|E(H_1'')|&=|E(H_1)|-|\{v_1v_2\}|+|E_{H_1''}(u) \cup E_{H_1''}(v_3)|\\
			&\geq 2|V(H_1)|-2-1+d_{H_1''}(u)+d_{H_1''}(v_3)-1\\
			&= 2|V(H_1)|-2-1+3+|V(H_1)|-1\\
			&> 2|V(H_1)|+2\\
			&=2|V(H_1'')|-2.
		\end{align*}	
		By Corollary~\ref{size-sufficient}, we get $\bar{\tau}(H_1'')\geq 2$, and hence $\bar{\tau}(G) \geq \bar{\tau}(H_1'')\geq 2$, a contradiction.
			\begin{figure}[!htb]
			\centering
			\begin{minipage}[b]{0.48\textwidth}
				\centering
				\includegraphics[
				width=0.95\textwidth,
				trim=4cm 17cm 4cm 4cm,  
				clip,
				keepaspectratio
				]{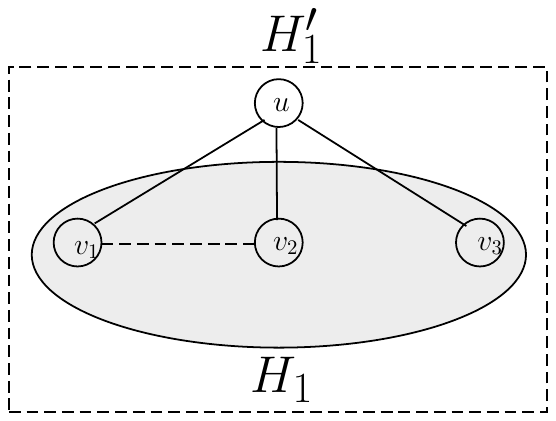}
				\caption{Structure for Case 1 of Lemma~\ref{edge conn-3}}
				\label{fig:case1}
			\end{minipage}
			\hfill
			\begin{minipage}[b]{0.48\textwidth}
				\centering
				\includegraphics[
				width=0.95\textwidth,
				trim=4cm 17cm 4cm 4cm,  
				clip,
				keepaspectratio
				]{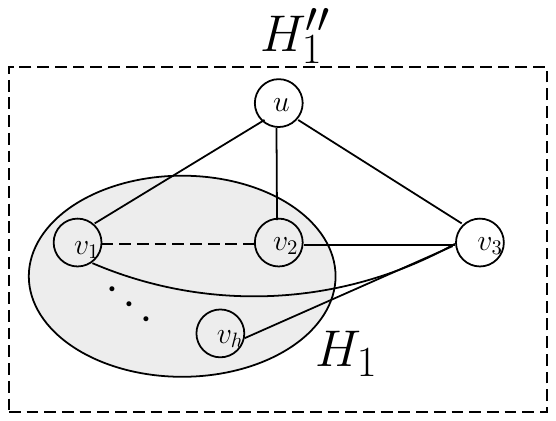}
				\caption{Structure for Case 2 of Lemma~\ref{edge conn-3}}
				\label{fig:case2}
			\end{minipage}		
		\end{figure}
				\begin{figure}[!htb]
			\centering
			\begin{minipage}[b]{0.48\textwidth}
				\centering
				\includegraphics[
				width=1.1\textwidth,
				trim=2.4cm 17.2cm 4cm 4cm,  
				clip,
				keepaspectratio
				]{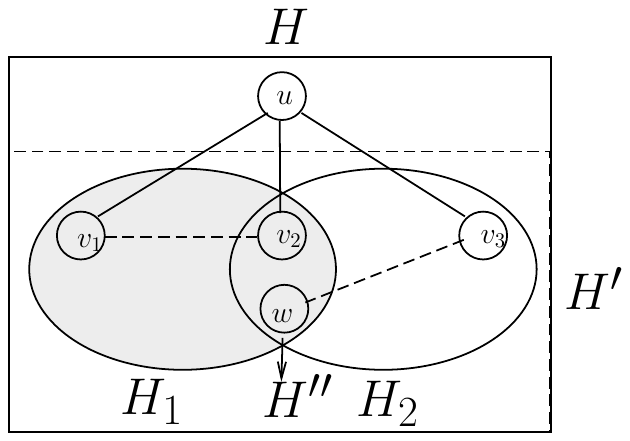}
				\caption{Structure for Case 3 of Lemma~\ref{edge conn-3}}
				\label{fig:case3}
			\end{minipage}
			\hfill
			\begin{minipage}[b]{0.48\textwidth}
				\centering
				\includegraphics[
				width=1.11\textwidth,
				trim=4cm 18cm 4cm 4cm,  
				clip,
				keepaspectratio
				]{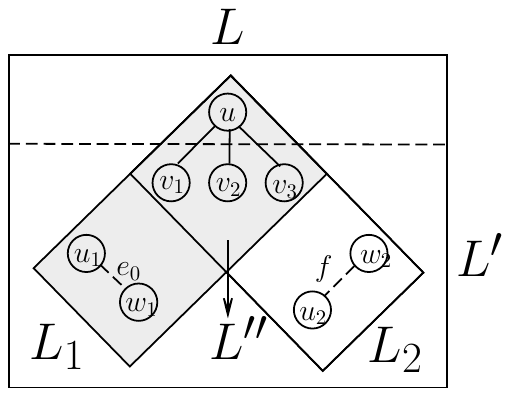}
				\caption{Structure for the proof of Claim~\ref{cl23}}
				\label{fig:case3b}
			\end{minipage}
		\end{figure}
		
	    \noindent{\bf Case 3.} $v_3\notin V(H_1)$ and $E_{G^c}[\{v_3\}, V(H_1)]\neq \emptyset$. 
		
	    Pick an edge $v_3 w\in E_{G^c}[\{v_3\}, V(H_1)]\subseteq E(G_2^c)$. By our assumption, $\bar{\tau}(G_2+v_3 w)\geq 2$, so there exists a subgraph $H_2\subseteq G_2+v_3 w$ such that $\tau(H_2)\geq 2$, and so $|E(H_2)|\geq 2|V(H_2)|-2$.
		
	    Let $H'=G[V(H_1)\cup V(H_2)]$, $H''=G[V(H_1)\cap V(H_2)]$ and $H=G[V(H')\cup \{u\}]$. Since $H''\subseteq G$, we have $\bar{\tau}(H'')\le 1<2$. By Lemma~\ref{size-bound}, we get $|E(H'')|\leq 2|V(H'')|-3$. Therefore, 
		\begin{align*}
			|E(H)|&=|E(H')|+|E_{H}(u)|\\
			&\geq |E(H_1)\setminus \{v_1v_2\}|+|E(H_2)\setminus\{v_3w\}|-|E(H'')|+3\\
			&\geq 2|V(H_1)|-3+2|V(H_2)|-3-2|V(H'')|+3+3\\
			&=2|V(H')|\\
			&=2|V(H)|-2.
		\end{align*}
	   By Corollary~\ref{size-sufficient}, we get $\bar{\tau}(H)\geq 2$, and hence $\bar{\tau}(G) \geq \bar{\tau}(H)\geq 2$, a contradiction.
          	
	   In all cases we obtain a contradiction. Therefore, there is an edge $e_0\in E(G_2^c)$ such that $\bar{\tau}(G_2+e_0)\leq 1$. \hfill\qedsymbol
	    
	   By Claim~\ref{cl22}, there is an edge $e_0\in E(G_2^c)$ satisfying $\bar{\tau}(G_2+e_0)\leq 1$. Since $G$ is $\tau_1$-maximal, there exists a subgraph $L_1 \subseteq G+e_0$ such that $\tau(L_1)=\bar{\tau}(G+e_0)\geq 2$, and so $|E(L_1)|\geq 2|V(L_1)|-2$. 
	
	   If $u\notin V(L_1)$ then $L_1\subseteq G_2+e_0$, contradicting $\bar{\tau}(G_2+e_0)\le 1$. Thus $u\in V(L_1)$. If $d_{L_1}(u)\le 2$, then $L_1-u\subseteq G_2+e_0$ and $\tau(L_1-u)\ge 2$, again a contradiction. Hence, $d_{L_1}(u)=3$.  
		
	    \begin{Cla}\label{cl23}
	    	For any $f\in E(G_2^c)\setminus \{e_0\}$, we have $\bar{\tau}(G_2+e_0+f)\geq 2$.
	    \end{Cla} 
	    
	    \noindent\emph{Proof of Claim~\ref{cl23}.} Since $G$ is $\tau_1$-maximal and $f\in E(G_2^c)$, there exists a subgraph $L_2\subseteq G+f$ such that $\tau(L_2)=\bar{\tau}(G+f)\geq 2$ and $f\in E(L_2)$. Thus $|E(L_2)|\geq 2|V(L_2)|-2$. 
	    If $u\notin L_2$, then $L_2\subseteq G_2+e_0+f$, so $\bar{\tau}(G_2+e_0+f)\geq \tau(L_2)\geq 2$, and the result holds. 
	    
	    If $u\in V(L_2)$, then by the same analysis as for $L_1$, we have $d_{L_2}(u)=3$. Let $L'=G[V(L_1)\cup V(L_2)\setminus\{u\}]\subseteq G_2$, $L''=G[V(L_1)\cap V(L_2)]$ and $L=L'\cup \{e_0\}\cup \{f\}\subseteq G_2+e_0+f$. Since $L''\subseteq G$, we have $|E(L'')|\leq 2|V(L'')|-3$. Therefore,
	    \begin{align*}
		|E(L)|&=|E(L')|+2\\
		&\geq |E(L_1)|-1+|E(L_2)|-1-|E(L'')|-|E_{G}(u)|+2\\
		&\geq 2|V(L_1)|-3+2|V(L_2)|-3-2|V(L'')|+3-3+2\\
		&=2|V(L)\cup \{u\}|-4\\
		&= 2|V(L)|-2.
    	\end{align*}
    	By Corollary~\ref{size-sufficient}, we obtain $\bar{\tau}(L)\geq 2$, and hence $\bar{\tau}(G_2+e_0+f)\geq  \bar{\tau}(L)\geq 2$.\hfill\qedsymbol
    	
    	Combining Claims~\ref{cl21},~\ref{cl22} and~\ref{cl23}, we conclude the result holds. 
	\end{proof}

	\section{Extremal size of $\tau_k$-maximal graphs}\label{Se3}
	
	We begin this section with an upper bound on the size of $\tau_k$-maximal graphs, which follows directly from Corollary \ref{size-sufficient}.	
	
	\begin{thm}\label{upper bound}
		Let $k\geq 1$ and $n\geq 2$ be integers. If $G$ is a $\tau_k$-maximal graph of order $n$, then 
		\[
		|E(G)|\leq (k+1)(n-1)-1.
		\]
	\end{thm}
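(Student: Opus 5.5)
The plan is to argue by contradiction and apply Corollary~\ref{size-sufficient} with $k+1$ in place of $k$. Suppose $G$ is $\tau_k$-maximal of order $n$ and $|E(G)|\geq (k+1)(n-1)$. Since $k+1\geq 2\geq 1$ and $n\geq 2$, Corollary~\ref{size-sufficient} (equivalently, the contrapositive of Lemma~\ref{size-bound} applied with $k+1\geq 2$) gives $\bar{\tau}(G)\geq k+1$. This contradicts the requirement $\bar{\tau}(G)\leq k$ in the definition of a $\tau_k$-maximal graph. Hence $|E(G)|\leq (k+1)(n-1)-1$.

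Only the condition $\bar{\tau}(G)\leq k$ is used; the maximality condition is not needed. So the bound in fact holds for every graph $G$ of order $n\geq 2$ with $\bar{\tau}(G)\leq k$.

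There is no real obstacle. The one point to check is that the hypotheses of Corollary~\ref{size-sufficient} (or of Lemma~\ref{size-bound}) hold for the parameter $k+1$, namely $k+1\geq 2$ and $n\geq 2$. Both hold, so the inequality follows at once.

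For small orders the bound may be far from tight. For instance, $K_n$ with $n\leq 2k+1$ has $\binom{n}{2}\leq (k+1)(n-1)-1$ edges whenever $n\geq 2$, since $\frac{n}{2}\leq k+\frac{1}{2}<k+1-\frac{1}{n-1}$ for $n\leq 2k+1$ (roughly). This is consistent with the statement and needs no separate treatment.
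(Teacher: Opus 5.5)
Your proposal is correct and is exactly the paper's argument: assume $|E(G)|\geq (k+1)(n-1)$ and apply Corollary~\ref{size-sufficient} with parameter $k+1$ to get $\bar{\tau}(G)\geq k+1$, which contradicts $\bar{\tau}(G)\leq k$. Your remark that only the condition $\bar{\tau}(G)\leq k$ is used, and not maximality, is also accurate.
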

	
	\begin{proof}
		Since $G$ is $\tau_k$-maximal, we have $\bar{\tau}(G)\leq k$. 
		Suppose that $|E(G)|\geq (k+1)(n-1)$. 
		Then by Corollary~\ref{size-sufficient}, $\bar{\tau}(G)\geq k+1$, a contradiction.
	\end{proof}
	
	Theorem~\ref{upper bound} shows that $(k+1)(n-1)-1$ is an upper bound on the size of $\tau_k$-maximal graph on $n$ vertices. We now construct a family of $\tau_k$-maximal graphs with prescribed edge connectivity that attain this bound. Our construction relies heavily on Harary graphs. 
	
	A \emph{Harary graph} $H_{k,n}$~\cite{Harary1962} is a $k$-connected graph on $n$ vertices with the minimum possible number of edges. It is well known that $H_{k,n}$ has exactly $\lceil\frac{kn}{2}\rceil$ edges. When $k\geq 2$ is even, $H_{k,n}$ is Hamiltonian, and thus its \emph{matching number} of $H_{k,n}$ is $\lfloor\frac{n}{2}\rfloor$.
    
    Let $G_1$ and $G_2$ be two vertex-disjoint graphs. The \emph{union} of $G_1$ and $G_2$, denoted by $G_1\cup G_2$, is the graph with vertex set $V(G_1)\cup V(G_2)$ and edge set $E(G_1)\cup E(G_2)$. The \emph{join} of $G_1$ and $G_2$, denoted by $G_1\vee G_2$, is the graph with vertex set $V(G_1)\cup V(G_2)$ and edge set \[E(G_1\vee G_2)=E(G_1)\cup E(G_2)\cup\{uv:u\in V(G_1), v\in V(G_2)\}.\]  
    
    \begin{Def}\label{construction}
      Let $n$, $k$ and $l$ be positive integers. Define the graph $G(k,l;n)$ as follows.
    \begin{enumerate}
     \item[\rm{(i)}] If $1\leq l \leq k$ and $n\geq 2k+2$, then 
    \[G(k,l;n)=K_{k-l+2}\vee \left(K_{k+l}^- \cup H_{2(l-1),n-2k-2}\right),\]
    where $K_{k+l}^-$ denotes the graph obtained from the complete graph $K_{k+l}$ by deleting exactly an edge. 
      
    \item[\rm{(ii)}] If $l=k+1$ and $n\geq 2k+4$, then 
    \[G(k,l;n)=H_{2(k+1),n} - M,\]
    where $M$ is a matching in $H_{2(k+1),n}$ with $|M|=k+2$.
    \end{enumerate}
       \end{Def}
       
 Let $\mathcal{G}(k,l)$ denote the family of all graphs $G(k,l;n)$ of order $n$ as defined above. We will show that every graph in this family is $\tau_k$-maximal and has edge connectivity exactly $k+l$. Now we recall some results on edge connectivity of graphs.
 
\begin{thm}[Plesn\'{\i}k~\cite{Plesnik1975}]\label{diameter condition}
  Let $G$ be a connected graph of diameter at most $2$. Then $\kappa'(G)=\delta(G)$.
\end{thm}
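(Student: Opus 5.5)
This is Plesník's theorem, which is cited in the paper, so strictly speaking it needs no proof here. A self-contained argument runs as follows. We always have $\kappa'(G)\le\delta(G)$, because the edges incident with a vertex of minimum degree form an edge cut. So the task is to show that every edge cut has at least $\delta=\delta(G)$ edges.

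Take a minimum edge cut $F=E_G(S)$ with $\emptyset\neq S\subsetneq V(G)$, and write $T=V(G)\setminus S$. The main step is to show that either $S$ or $T$ contains a vertex with no neighbour on the other side. Suppose instead that every vertex of $S$ has a neighbour in $T$ and every vertex of $T$ has a neighbour in $S$. Then $|F|\ge \max\{|S|,|T|\}$, and this alone is too weak, so a different approach is needed.

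The approach is to use the diameter directly. Suppose $|F|<\delta$. Pick $x\in S$. Since $x$ has at least $\delta$ neighbours and at most $|F|<\delta$ of its incident edges lie in $F$, $x$ has a neighbour in $S$. Counting more carefully: each vertex $x\in S$ has at least $\delta-d_T(x)$ neighbours in $S$, so $|S|\ge \delta-d_T(x)+1$. Summing over $x\in S$ gives $|F|=\sum_{x\in S} d_T(x)\ge |S|(\delta+1-|S|)$. For $1\le |S|\le \delta$ the right-hand side is at least $\delta$, since the quadratic $s(\delta+1-s)$ equals $\delta$ at $s=1$ and $s=\delta$ and is larger in between. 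That contradicts $|F|<\delta$. Hence $|S|\ge\delta+1$, and symmetrically $|T|\ge \delta+1$.

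Now use the diameter. If every vertex of $S$ had a neighbour in $T$, then $|F|\ge|S|\ge\delta+1$, a contradiction. So some $x\in S$ has no neighbour in $T$, and likewise some $y\in T$ has no neighbour in $S$. Since $x$ and $y$ lie on opposite sides and are not adjacent, we have $d(x,y)\ge2$. Any path of length $2$ from $x$ to $y$ must go through a common neighbour $z$, which lies in $S$ or in $T$. If $z\in S$ then $y$ has a neighbour in $S$, and if $z\in T$ then $x$ has a neighbour in $T$; both are impossible. So $d(x,y)\ge 3$, contradicting diameter at most $2$. Therefore $|F|\ge\delta$ and $\kappa'(G)=\delta(G)$.

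The step that needs the most care is the quadratic counting bound, which shows that each side of a cut with fewer than $\delta$ edges has more than $\delta$ vertices. Once that is in place, the diameter argument is immediate.
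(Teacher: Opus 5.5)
Your proof is correct and complete. The paper does not prove this statement itself. It quotes it from Plesník and uses it only to derive the corollary on joins, so there is no in-paper argument to compare against.

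What you wrote is essentially the standard proof of Plesník's theorem. Assume a cut $F=E_G(S)$ has fewer than $\delta$ edges. The counting bound $|F|\ge |S|(\delta+1-|S|)$, together with $(s-1)(\delta-s)\ge 0$ for $1\le s\le\delta$, shows that $|S|$ and $|T|$ are both at least $\delta+1$, hence larger than $|F|$. So each side contains a vertex with no neighbour across the cut. Two such vertices, one on each side, have no common neighbour, so they are at distance at least $3$, which contradicts diameter at most $2$.

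Three small presentational points:
\begin{itemize}
\item Delete the paragraph where you call the bound $|F|\ge\max\{|S|,|T|\}$ ``too weak'' and abandon it. Your final argument uses exactly that observation, combined with $|S|,|T|\ge\delta+1$.
\item You never use the minimality of $F$, so you can work with an arbitrary cut $E_G(S)$.
\item The degenerate case $n=1$, where $\delta=\kappa'=0$ and there is no edge cut, is trivial and should be set aside at the start.
\end{itemize}
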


Since the join of any two graphs has diameter at most $2$, Theorem~\ref{diameter condition} immediately yields the following corollary.

\begin{cor}\label{edge connectivity of join}
  Let $G_1$ and $G_2$ be two vertex-disjoint graphs. Then \[\kappa'(G_1\vee G_2)=\delta(G_1\vee G_2)=\min\{\delta(G_1)+|V(G_2)|,\delta(G_2)+|V(G_1)|\}.\]
\end{cor}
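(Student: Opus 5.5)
The corollary follows from Theorem~\ref{diameter condition} once two things are checked: that $G_1\vee G_2$ is connected with diameter at most $2$, and that its minimum degree is the stated minimum. Both are direct calculations; the only care needed is the edge case of an empty graph.

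\textbf{Diameter.} I will assume, as the statement implicitly does, that both $V(G_1)$ and $V(G_2)$ are nonempty. Take distinct vertices $x,y$ of $G_1\vee G_2$.
\begin{enumerate}
\item[(a)] If $x$ and $y$ lie on opposite sides, then $xy$ is an edge of the join, so $d(x,y)=1$.
\item[(b)] If both lie in $V(G_1)$, pick any $z\in V(G_2)$. Then $xzy$ is a path, so $d(x,y)\le 2$. The case where both lie in $V(G_2)$ is symmetric.
\end{enumerate}
Hence $G_1\vee G_2$ is connected of diameter at most $2$. Theorem~\ref{diameter condition} then gives $\kappa'(G_1\vee G_2)=\delta(G_1\vee G_2)$.

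\textbf{Minimum degree.} Each vertex $v\in V(G_1)$ keeps its $G_1$-neighbours and is joined to every vertex of $G_2$. Since $G_1$ and $G_2$ are vertex-disjoint, these neighbourhoods do not overlap, so
\[
d_{G_1\vee G_2}(v)=d_{G_1}(v)+|V(G_2)|.
\]
Symmetrically, each $w\in V(G_2)$ has $d_{G_1\vee G_2}(w)=d_{G_2}(w)+|V(G_1)|$. Minimizing over all vertices yields
\[
\delta(G_1\vee G_2)=\min\{\delta(G_1)+|V(G_2)|,\ \delta(G_2)+|V(G_1)|\}.
\]

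\textbf{Main obstacle.} The argument has no real difficulty. The one point to flag is that nonemptiness of both parts is genuinely needed, since otherwise the join is not a true join and the diameter claim can fail. In every later application, for example $K_{k-l+2}\vee(\cdots)$, both parts are nonempty, so the corollary applies as stated.
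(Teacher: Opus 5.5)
Your proposal is correct and takes the same route as the paper. The paper also derives the corollary from Plesn\'{\i}k's theorem (Theorem~\ref{diameter condition}) via the observation that a join has diameter at most $2$, leaving the minimum-degree formula as the evident degree count you wrote out; your remark that both vertex sets must be nonempty makes explicit an assumption the paper leaves implicit.
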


 Let $G$ be a connected graph, and let $g$ be a positive integer. An edge set $F\subseteq E(G)$ is a \emph{$g$-restricted edge cut} of $G$ if $G-F$ is disconnected and every component of $G-F$ contains at least $g$ vertices. The \emph{$g$-restricted edge connectivity} of $G$, denoted by $\kappa'_g(G)$, is the cardinality of a minimum $g$-restricted edge cut. Define \[\xi_g(G)=\min\{|E_{G}(S)|:\emptyset\neq S\subseteq V(G),~|S|=g~\text{and}~G[S]~\text{is connected}.\}\]
 A graph $G$ is \emph{$\kappa'_g$-optimal} if $\kappa'_g(G)=\xi_g(G)$.

\begin{thm}[Liu, Huang and Zhang~\cite{Liu2011}] \label{restricted connectivity}
	Let $G=H_{k,n}$ be a Harary graph satisfying $k=2r$ is even. Then $G$ is $\kappa'_g$-optimal for any $1\leq g \leq \frac{n}{2}$. Furthermore	
\begin{align*}
		\kappa'_g(G)=
	\begin{cases}
		2rg - g(g-1), & \text{when } g<r,\\
		r(r +1), & \text{when } r \le g \le \dfrac{n}{2}.
	\end{cases}
\end{align*} 
\end{thm}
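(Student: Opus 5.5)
The plan is to work directly with the circulant description of $H_{2r,n}$. Its vertex set is $\mathbb{Z}_n$, and $i$ is adjacent to $i\pm d$ for $1\le d\le r$. The constraint $g\le n/2$ is what makes the min with $|S^c|$ harmless below. I would prove two things. First, for every $S$ with $g\le |S|\le n-g$,
\[
|E_G(S)|\ge \Phi(g):=2\sum_{d=1}^{r}\min\{d,g\}.
\]
Second, an interval of $g$ consecutive vertices (which induces a connected subgraph) attains $\Phi(g)$. Together these give $\kappa'_g(G)\ge \Phi(g)\ge\xi_g(G)$. The reverse inequality $\xi_g(G)\ge \kappa'_g(G)$ comes from the extremal interval itself: the edge cut $E_G(S)$ of an interval leaves two arcs, each inducing a connected subgraph of order at least $g$, so it is a $g$-restricted edge cut. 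Hence $\kappa'_g(G)=\xi_g(G)=\Phi(g)$. A direct computation then gives the stated values:
\[
\Phi(g)=g(g+1)+2g(r-g)=2rg-g(g-1)\quad\text{when } g<r,
\]
\[
\Phi(g)=r(r+1)\quad\text{when } g\ge r.
\]

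\textbf{Step 1 (decomposition by lengths).} Write $f=\mathbf 1_S$ and, for $1\le d\le r$, set
\[
c_d=|\{i: f(i)=1,\ f(i+d)=0\}|.
\]
Every edge of $G$ has a well-defined length $d\in\{1,\dots,r\}$; for $d=n/2$ there are no edges since $n\ge 2r+1$. The edges of length $d$ leaving $S$ are exactly the pairs $\{i,i+d\}$ or $\{i,i-d\}$ with $i\in S$ and the other end outside $S$. Translation invariance shows that both families have size $c_d$. Hence
\[
|E_G(S)|=2\sum_{d=1}^{r}c_d.
\]

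\textbf{Step 2 (upper bound from intervals).} For an interval $S$ of length $g$ with $n-g\ge g$, one checks directly that $c_d=\min\{d,g\}$. This gives $\xi_g(G)\le\Phi(g)$ and also shows the interval cut is a valid $g$-restricted cut.

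\textbf{Step 3 (key lemma, the main obstacle).} I need to show that $c_d\ge\min\{d,|S|,|S^c|\}$ for every $S$. My first approach is a window argument. For $W_a=\{a,\dots,a+d-1\}$,
\[
\sum_{i\in W_a}\bigl(f(i)-f(i+d)\bigr)=|S\cap W_a|-|S\cap W_{a+d}|,
\]
and $c_d$ dominates the positive part of this sum. Moreover, the sets of $i$ counted for disjoint windows $W_a, W_{a+d},\dots$ along an orbit are disjoint. So I would chain windows $W_a,W_{a+d},W_{a+2d},\dots$, starting from a window maximizing $|S\cap W_a|$ and following the walk until it reaches a window minimizing it. This should telescope to $c_d\ge \max_a|S\cap W_a|-\min_a|S\cap W_a|$.

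It then remains to show that this max minus min is at least $\min\{d,|S|,|S^c|\}$, or else to handle the residual cases by a direct orbit count. The direct count uses the cosets of $\langle d\rangle$: each orbit meeting both $S$ and $S^c$ contributes at least one to $c_d$, and there are $\gcd(n,d)$ such orbits. The case where windows never become full or empty is where care is needed. In that case I would count, for each of the $d$ residue classes mod $d$ inside a long arc, a separate transition.

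With the lemma in hand, summing gives $|E_G(S)|\ge\Phi(g)$ for all admissible $S$, and the proof is complete.
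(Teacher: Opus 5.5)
Your reduction is sound: you bound $|E_G(S)|$ from below for every $S$ with $g\le|S|\le n-g$, write $|E_G(S)|=2\sum_{d=1}^r c_d$, and use an interval of length $g$ for the matching upper bound. The target inequality $|E_G(S)|\ge\Phi(g)$ is in fact true. The paper only quotes this theorem from Liu, Huang and Zhang, so everything rests on your Step~3.

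That step fails. The lemma $c_d\ge\min\{d,|S|,|S^c|\}$ for each \emph{single} length $d$ is false. Take $d\ge 2$, $m\ge 2$ with $md<n$, and $S=\{0,d,2d,\dots,(m-1)d\}$. The only $i\in S$ with $i+d\notin S$ is $i=(m-1)d$, so $c_d=1$, whereas $\min\{d,m,n-m\}\ge 2$. Concretely, in $H_{4,9}$ the set $S=\{0,2,4\}$ has $c_2=1$ although the lemma demands $2$. This set is connected with connected complement, so it gives a genuine $3$-restricted cut.

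Neither fallback rescues this. Every window of $d$ consecutive vertices contains at most one vertex of such an $S$, so $\max_a|S\cap W_a|-\min_a|S\cap W_a|\le 1$. The coset count gives at most $\gcd(n,d)$, which is $1$ when $n$ is prime. For such $S$ the deficit in $c_d$ is paid for by the other lengths (here $c_1=m$). So $\sum_d c_d\ge\sum_d\min\{d,g\}$ holds only in aggregate, and no sum of per-length bounds can prove it.

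Here is one way to couple the lengths. Let $m=\min\{|S|,|S^c|\}\ge g$.

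\emph{Case $m\le r$.} Each vertex of the smaller side has degree $2r$ and at most $m-1$ neighbours on its own side. Hence $|E_G(S)|\ge m(2r+1-m)$. This is nondecreasing in $m$ for $m\le r$, and at $m=g$ it equals $2rg-g(g-1)$.

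\emph{Case $m\ge r$.} Since $\Phi(g)\le r(r+1)$, it suffices to show $|E_G(S)|\ge r(r+1)$. Induct on $n$. For $n\ge 2r+2$, split off at a vertex $v$ the $r$ pairs $\{v(v-i),v(v+j)\}$ with $i+j=r+1$. The new edges have cyclic length $r+1$ in $\mathbb{Z}_n$ and become length $r$ once $v$ is deleted, so the result is exactly $H_{2r,n-1}$ on $V\setminus\{v\}$. Splitting off never increases a cut, so $|E_{H_{2r,n}}(S)|\ge|E_{H_{2r,n-1}}(S\setminus\{v\})|$. Choose $v$ on a side with at least $r+1$ vertices, so both sides keep at least $r$ vertices. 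The induction ends at $H_{2r,2r+1}=K_{2r+1}$, where $|E(S)|=r(r+1)$ for $|S|\in\{r,r+1\}$.

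Combined with your interval computation, this gives $\kappa'_g=\xi_g=\Phi(g)$.
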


The following observation will also be used later.
 
 \begin{Obs}\label{Combinatorial Identities}
 	Let $a$ and $b$ be two positive integers. If $a$, $b\geq 1$, then $\binom{a+b}{2}=\binom{a}{2}+\binom{b}{2}+ab$.
 \end{Obs}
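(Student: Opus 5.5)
This is a standard combinatorial identity, and I would prove it by double counting. Take disjoint sets $A$ and $B$ with $|A|=a$ and $|B|=b$, and count the $2$-element subsets of $A\cup B$, of which there are $\binom{a+b}{2}$. Every such pair $\{x,y\}$ falls into exactly one of three classes:
\begin{enumerate}
\item[(a)] both elements lie in $A$, giving $\binom{a}{2}$ pairs;
\item[(b)] both elements lie in $B$, giving $\binom{b}{2}$ pairs;
\item[(c)] one element lies in $A$ and the other in $B$, giving $ab$ pairs, since the pair is determined by the choice of its element in $A$ and its element in $B$.
\end{enumerate}
Adding the three class sizes gives the identity.

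In the paper's language, this is the edge count of $K_{a+b}=K_a\vee K_b$. By the definition of the join,
\[
E(K_a\vee K_b)=E(K_a)\cup E(K_b)\cup\{uv:u\in V(K_a),v\in V(K_b)\},
\]
and this union is disjoint, which yields $\binom{a}{2}+\binom{b}{2}+ab$. This is presumably the form in which the observation will be used when counting edges of $G(k,l;n)$.

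As an algebraic check,
\[
\binom{a}{2}+\binom{b}{2}+ab=\frac{a^2-a+b^2-b+2ab}{2}=\frac{(a+b)^2-(a+b)}{2}=\binom{a+b}{2}.
\]
The hypothesis $a,b\ge 1$ is not actually needed; the identity also holds for $a=0$ or $b=0$.

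There is no real obstacle here. The only point needing care is that the three classes are pairwise disjoint and together cover all pairs, and this follows immediately from $A\cap B=\emptyset$.
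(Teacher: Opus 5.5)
Your proof is correct. The paper states this identity as an observation and gives no proof, so either your double count of pairs in $A\cup B$ (equivalently, the edge partition of $K_a\vee K_b=K_{a+b}$) or your one-line algebraic check is enough. Keep your remark that the hypothesis $a,b\ge 1$ is unnecessary: in part (iii) of Lemma~\ref{main lemma} the paper applies the identity with $a=|A|$ and $b=|B|$, and either of these may be $0$.
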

With the above preparations, we prove the following result.

\begin{lem}\label{main lemma}
Let $n$, $k$ and $l$ be positive integers satisfying $1 \leq l \leq k$ and $n\geq 2k+2$, or $l=k+1$ and $n\geq 2k+4$. Then for any $G\in \mathcal{G}(k,l)$, the following statements hold:
\begin{enumerate}
    \item[\rm{(i)}] $\kappa'(G)=k+l$;
    \item[\rm{(ii)}] $|E(G)|=(k+1)(n-1)-1$;
    \item[\rm{(iii)}] $G$ is $\tau_k$-maximal.
\end{enumerate}
\end{lem}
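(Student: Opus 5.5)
The proof checks (i) and (ii) by direct computation. For (iii) I would reduce everything to one sparsity statement. Throughout, in case (i) write $A=K_{k-l+2}$, $B=K^-_{k+l}$, $C=H_{2(l-1),m}$ with $m=n-2k-2$, so that $G=A\vee(B\cup C)$.

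\textbf{Step 1: size (ii).} In case (i), count the edges inside $A$, $B$ and $C$ and between $A$ and $B\cup C$:
\[\tbinom{k-l+2}{2}+\tbinom{k+l}{2}-1+(l-1)m+(k-l+2)(k+l+m).\]
Group the terms that do not involve $m$ using Observation~\ref{Combinatorial Identities}: they give $\binom{2k+2}{2}-1=(k+1)(2k+1)-1$. The coefficient of $m$ is $(l-1)+(k-l+2)=k+1$. Adding these gives $(k+1)(n-1)-1$. In case (ii), $H_{2(k+1),n}$ has $(k+1)n$ edges, and removing $|M|=k+2$ edges leaves $(k+1)(n-1)-1$.

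\textbf{Step 2: edge connectivity (i).} In case (i), apply Corollary~\ref{edge connectivity of join}. The minimum degree in $B\cup C$ is $\min\{k+l-2,\,2l-2\}=2l-2$ (using $l\le k$; when $l=1$, $C$ is edgeless). Adding $|A|=k-l+2$ gives $k+l$. The other term of the minimum, $(k-l+1)+(n-k+l-2)$, is larger.

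I expect to need a caveat for degenerate parameters. When $m=0$ the graph is $K_{2k+2}^-$, whose edge connectivity is $2k$, so (i) then requires $l=k$. The Harary graph also needs $m>2(l-1)$ to exist. I would make these implicit requirements explicit.

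In case (ii), the degree sequence gives $\kappa'(G)\le 2k+1$. For the lower bound, take a cut $E(S)$ with $|S|=g\le n/2$. Its size is at least $\kappa'_g(H_{2(k+1),n})-\min\{g,k+2\}$, because only the $M$-edges that cross the cut are lost. Theorem~\ref{restricted connectivity} with $r=k+1$ shows this is at least $2k+1$ for every $g$: for $g=1$ it gives $2k+2-1$, and for larger $g$ there is ample slack.

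\textbf{Step 3: reducing (iii) to sparsity.} The key claim is:
\[\text{for every } S\subseteq V(G) \text{ with } |S|=s\ge 2,\qquad |E(G[S])|\le (k+1)(s-1)-1. \quad (\ast)\]
Given $(\ast)$, no subgraph of $G$ has $k+1$ edge-disjoint spanning trees, since such a subgraph on $s$ vertices would need $(k+1)(s-1)$ edges. Hence $\bar\tau(G)\le k$.

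Now let $e\in E(G^c)$ be arbitrary. I claim $G+e$ itself has $\tau\ge k+1$. Take any partition $\mathcal P=\{P_1,\dots,P_t\}$ with $|P_i|=n_i$. By $(\ast)$, each $(G+e)[P_i]$ has at most $(k+1)(n_i-1)$ edges: $G[P_i]$ has at most one fewer, and $e$ adds at most one. Together with Step 1 this gives
\[|E((G+e)_{\mathcal P})|\ge (k+1)(n-1)-\sum_i (k+1)(n_i-1)=(k+1)(t-1).\]
Theorem~\ref{NT} then yields $k+1$ edge-disjoint spanning trees. So the whole problem reduces to $(\ast)$.

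\textbf{Step 4: proving $(\ast)$, the main obstacle.}

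In case (ii), use the identity $2|E(H[S])|=2(k+1)s-|E_H(S)|$, where $H=H_{2(k+1),n}$; by symmetry we may take $s\le n/2$. Subtracting the $M$-edges that lie inside $S$, it suffices to show
\[\tfrac12|E_H(S)|+|M\cap E(H[S])|\ge k+2.\]
Bound $|E_H(S)|$ from below by Theorem~\ref{restricted connectivity}, split into $s<k+1$ and $s\ge k+1$. For small $s$ the cut alone is large enough; only $s=2$ and tiny $k$ need a direct check.

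In case (i), consider the excess $f(S)=|E(G[S])|-(k+1)(|S|-1)$ and take a minimal $S$ with $f(S)\ge 0$. Deleting a vertex of degree at most $k+1$ in $G[S]$ does not decrease $f$. So every vertex of $G[S]$ has degree at least $k+2$ in $G[S]$. Now write $a=|S\cap A|$, $b=|S\cap B|$, $c=|S\cap C|$ and compute $|E(G[S])|$ explicitly. The edges inside $C$ are at most $(l-1)c$, and the count $|E(G[S])|\le(k+1)(s-1)-1$ follows by the same binomial grouping as in Step 1, with equality only at $S=V(G)$.

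I expect the delicate subcase to be when $S$ contains part of $C$. There I would use that a vertex of $C$ has total degree $k+l$ in $G$. Its excess over $k+1$ is then paid for by the deficit created by the missing edge of $B$ together with the vertices of $A$ and $B$ absent from $S$.
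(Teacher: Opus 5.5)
Your overall route is the paper's. It uses the edge count, Corollary~\ref{edge connectivity of join} and Theorem~\ref{restricted connectivity} for $\kappa'$, and your $(\ast)$ for $\bar\tau(G)\le k$; $(\ast)$ is essentially what the paper's $A,B,C$ count and handshake count establish. The one structural difference is the last step. The paper gets $\bar\tau(G+e)\ge k+1$ for free from Corollary~\ref{size-sufficient}, because $|E(G+e)|=(k+1)(n-1)$, so it only needs sparsity for $s\ge 2k+2$. Your Nash-Williams--Tutte argument gives the stronger $\tau(G+e)\ge k+1$, but in exchange it needs $(\ast)$ for every $s\ge 2$.

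Your caveat about degenerate parameters is correct and exposes an oversight in the paper. Its value $\delta(K^-_{k+l}\cup H)=2(l-1)$ presumes $n-2k-2\ge 1$. For $n=2k+2$ and $l<k$ the graph is $K^-_{2k+2}$, with $\kappa'=2k\ne k+l$ (e.g.\ $k=2$, $l=1$, $n=6$). Moreover, $H_{2(l-1),n-2k-2}$ does not exist when $l\ge 2$ and $1\le n-2k-2\le 2l-2$.

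The step that fails as written is ``by symmetry we may take $s\le n/2$'' in Step~4, case (ii). The cut $E_H(S)$ is symmetric, so that reduction was legitimate in Step~2. The quantity $\tfrac12|E_H(S)|+|M\cap E(H[S])|$ is not symmetric, and the range you discard contains exactly the two cases where the cut alone is insufficient:
\begin{itemize}
\item for $s=n-1$ one has $\tfrac12|E_H(S)|=k+1<k+2$, so you need the at least $k+1$ edges of $M$ that remain inside $S$;
\item for $s=n$ the cut is empty, and the required inequality holds with equality, $|M|=k+2$. This is the only place where the size of $M$ enters.
\end{itemize}
Conversely, ``$s=2$ and tiny $k$'' is not a problem. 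For every $2\le s\le n-2$ one has $|E_H(S)|\ge 4k+2$. This follows from $\kappa'_2$ when $H[S]$ and $H[V\setminus S]$ are both connected. Otherwise the cut splits into at least two cuts of the $(2k+2)$-edge-connected graph $H$. Either way $\tfrac12|E_H(S)|\ge 2k+1\ge k+2$, with no help from $M$. Replace the symmetry reduction by the three ranges $2\le s\le n-2$, $s=n-1$, $s=n$, as the paper does.

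In case (i), ``equality only at $S=V(G)$'' is false. The set $S=V(K_{k-l+2})\cup V(K^-_{k+l})$ has exactly $\binom{2k+2}{2}-1=(k+1)(s-1)-1$ edges. This is harmless, since $(\ast)$ is non-strict.

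The subcase $S\cap C\neq\emptyset$ is also not delicate. The edges meeting $S\cap C$ number at most $(a+l-1)c\le (k+1)c$, because $a\le k-l+2$. Charging each $C$-vertex its full degree $k+l$ double-counts the edges inside $C$. The remaining edges number at most $\binom{a+b}{2}$, minus one when $b=k+l$, and this is at most $(k+1)(a+b-1)-1$ whenever $a+b\ge 2$.

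The grouping breaks only for $a+b\le 1$. The case $a+b=1$ is immediate. The case $a+b=0$ is where your minimality reduction is genuinely useful: minimum degree $\ge k+2$ forces $c\ge k+3$, hence $(l-1)c<(k+1)(c-1)$. This requires you to minimise over sets with $|S|\ge 2$, since singletons have $f=0$.
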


\begin{proof}
We proceed by considering the two cases $1\leq l\leq k$ and $l=k+1$ separately. 

\noindent{\bf Case 1.} $1\leq l \leq k$.

Let $H=H_{2(l-1), n-2k-2}$. Then by Definition~\ref{construction}, $G=K_{k-l+2}\vee \left(K_{k+l}^- \cup H\right)$. We verify the following three statements.

\noindent{\rm{(i)}} By the definition of $G$ and Corollary~\ref{edge connectivity of join}, we obtain \begin{align*}
      \kappa'(G)&=\min\{\delta(K_{k-l+2})+|V(K_{k+l}^- \cup H)|,\delta(K_{k+l}^- \cup H)+|V(K_{k-l+2})|\}\\
      &=\min\{(k-l+1)+(k+l)+(n-2k-2),2(l-1)+(k-l+2)\}\\
      &=\min\{n-1,k+l\}\\
      &=k+l.
\end{align*}
\noindent{\rm{(ii)}} By Definition~\ref{construction} and Observation~\ref{Combinatorial Identities}, we obtain
\begin{align*}
|E(G)|&=|E(K_{k-l+2})|+|E(K_{k+l}^-)|+|E(H)|+|V(K_{k-l+2})|\cdot(|V(K_{k+l}^-)|+|V(H)|)\\
&=\binom{k-l+2}{2}+\left(\binom{k+l}{2}-1\right)+(l-1)(n-2k-2)+(k-l+2)(k+l+n-2k-2)\\
&=\binom{2k+2}{2}+(n-2k-2)(k-l+2+l-1)-1\\
&=(k+1)(2k+1)+(k+1)(n-2k-2)-1\\
&=(k+1)(n-1)-1.
\end{align*}

\noindent{\rm{(iii)}} We show that $\bar{\tau}(G)\leq k$ by contradiction. Suppose that there exists a subgraph $L\subseteq G$ with $\tau(L)\geq k+1$, then $|E(L)|\geq (k+1)(|V(L)|-1)$.
		
Let $A= V(K_{k-l+2})\cap V(L)$, $B= V(K_{k+l}^-)\cap V(L)$ and $C= V(H)\cap V(L)$. Then $V(L)=A\cup B \cup C$, and 
\[|E(L)|=|E(L[A])|+|E(L[B])|+|E(L[C])|+|A|(|B|+|C|).\]
Clearly, $0\leq |A|\leq k-l+2$, $0\leq |B|\leq k+l$ and $0\leq |C| \leq n-2k-2$.

If $|B|<k+l$, then $|A|+|B|< 2k+2$, and so
\begin{align*}
	|E(L)|&\leq \binom{|A|}{2}+\binom{|B|}{2}+(l-1)|C|+|A|(|B|+|C|)\\
	&=\binom{|A|+|B|}{2}+(|A|+l-1)|C|\\
	&=\frac{(|A|+|B|)(|A|+|B|-1)}{2}+(|A|+l-1)|C|\\
	&<(k+1)(|A|+|B|-1)+(|A|+l-1)|C|\\
	&\leq (k+1)(|A|+|B|-1)+(k+1)|C|\\
	&=(k+1)(|A|+|B|+|C|-1)\\
	&=(k+1)(|V(L)|-1),
\end{align*}
a contradiction.

If $|B|=k+l$, then $|A|+|B|\leq 2k+2$, and so
\begin{align*}
|E(L)|&\leq \binom{|A|}{2}+\left(\binom{|B|}{2}-1\right)+(l-1)|C|+|A|(|B|+|C|)\\
&=\binom{|A|+|B|}{2}+(|A|+l-1)|C|-1\\
&\leq(k+1)(2k+1)+(|A|+l-1)|C|-1\\
&\leq(k+1)(2k+1)+(k+1)|C|-1\\
&=(k+1)(|V(L)|-1)-1,
\end{align*}
a contradiction.  Hence no such $L$ exists, and $\bar{\tau}(G)\leq k$.

Since $|E(G)|=(k+1)(n-1)-1$, adding any edge $e$ to $G$, we get $|E(G+e)|=(k+1)(n-1)$. By Corollary~\ref{size-sufficient}, $\bar{\tau}(G+e)\geq k+1$. Combining this with $\bar{\tau}(G)\leq k$, we get $G$ is $\tau_k$-maximal.

\noindent{\bf Case 2.} $l= k+1$.

Let $H' = H_{2(k+1),n}$ be a Harary graph. Then $H'$ is $2(k+1)$-regular and $G = H' - M$.

\noindent{\rm{(i)}} Since removing a matching $M$ with $|M|=k+2$ reduces the degree of exactly $2(k+2)$ vertices by $1$, we have \[\delta(G) = 2(k+1) - 1 = 2k+1 = k+l.\]
Therefore, $\kappa'(G) \leq \delta(G)\leq k+l$.

It remains to show that $\kappa'(G) \geq k+l=2k+1$. Assume there exists a proper nonempty subset $X \subset V(G)$ such that the edge cut $E_G(X)$ satisfies $|E_G(X)| \leq 2k$.
Let $|E_{H'}(X) \cap M|=t$. Then $t\leq |M|=k+2$ and
\[
|E_{H'}(X)| = |E_G(X)| + t \leq 2k + t.
\]
Since $H'$ is $2(k+1)$-edge-connected, $|E_{H'}(X)| \geq 2k+2$, which implies $t \geq 2$. We consider the following two subcases.

\noindent{\bf Subcase 2.1.} $|X| = 1$ or $|V(G)\setminus X| = 1$.

Without loss of generality, assume $|X| = 1$. Since $M$ is a matching, each vertex in $X$ is incident to at most one edge in $M$, it follows that $t \leq 1$, contradicting $t \geq 2$.

\noindent{\bf Subcase 2.2.} $|X| \geq 2$ and $|V(G)\setminus X| \geq 2$.

Applying Theorem~\ref{restricted connectivity} to $H'$ with $r=k+1$ and $g\geq 2$, we obtain $E_{H'}(X)\geq 4k+2$. Therefore,
\[
|E_G(X)| = |E_{H'}(X)| - t \geq (4k+2) - (k+2) = 3k >2k,
\]
contradicting $|E_G(X)| \leq 2k$.

Hence $\kappa'(G) \geq 2k+1=k+l$.
Combining with $\kappa'(G) \leq \delta(G) = k+l$, we conclude that $\kappa'(G) = k+l$.

\noindent{\rm{(ii)}} By a simple calculation, we get
\begin{align*}
|E(G)|&=|E(H')|-|M|\\
&=(k+1)n-(k+2)\\
&=(k+1)(n-1)-1.
\end{align*}

\noindent{\rm{(iii)}} We prove that $\bar{\tau}(G)\leq k$ by contradiction. Assume there exists a subgraph $L'\subseteq G$ with $\tau(L')\geq k+1$. Thus
\[|E(L')|\geq (k+1)(|V(L')|-1).\]

By the fact that \[k+1 \leq \tau(L') \leq \tau(K_{|V(L')|}) =\lfloor\tfrac{|V(L')|}{2}\rfloor,\] we get $|V(L')|\geq 2k+2$.

If $2k+2\leq |V(L')|\leq n-2$, then by the handshaking lemma and Theorem~\ref{restricted connectivity}, we have 
\begin{align*}
	|E(L')|&=\frac{1}{2}\sum\nolimits_{v\in V(L')}d_{L'}(v)\\
	&\leq \frac{1}{2}\Big[\sum\nolimits_{v\in V(L')}d_{H'}(v)-\big|E_{H'}(V(L'))\big|\Big]\\
	&\leq \frac{1}{2}\Big[(2k+2)|V(L')|-(4k+2)\Big]\\
	&= (k+1)|V(L')|-(2k+1)\\
	&< (k+1)\big(|V(L')|-1\big).
\end{align*}
a contradiction.

If $|V(L')|=n-1$, let $v$ be the unique vertex in $V(G)\setminus V(L')$. Then $|E_{G}(v)|=d_{G}(v)\geq \delta(G)=2k+1$, and thus
\begin{align*}
	|E(L')|&=|E(G)|-|E_{G}(v)|\\
	&\leq (k+1)(n-1)-1-(2k+1)\\
	&<(k+1)(|V(L')|-1),
\end{align*}
which again yields a contradiction.

If $|V(L')|=n$, then $L'=G$, and hence
\[|E(L')|=|E(G)|=(k+1)(n-1)-1<(k+1)(|V(L')|-1),\]
a contradiction. Therefore, $\bar{\tau}(G)\leq k$.

 Since $|E(G)|=(k+1)(n-1)-1$, adding any edge $e\in E(G^c)$ to $G$ yields $|E(G+e)|=(k+1)(n-1)$. By Corollary~\ref{size-sufficient}, we have $\bar{\tau}(G+e)\geq k+1$. Combining with $\bar{\tau}(G)\leq k$,
 we have $G$ is $\tau_k$-maximal.
\end{proof}

The above construction shows that for any integers $k\ge 1$ and $1\le l\le k+1$, there exists a $\tau_k$-maximal graph having edge connectivity exactly $k+l$. Together with Lemma~\ref{edge connectivity}, this implies that the bounds $k+1\le \kappa'(G)\le 2k+1$ are sharp for all $k$. 

Theorem~\ref{upper bound} and the existence of extremal graphs in $\mathcal{G}(k,l)$ motivate the following conjecture.

\begin{conj}\label{extremal size}
	Let $k\geq 1$ and $n\geq 2k+2$ be integers. If $G$ is a $\tau_k$-maximal graph of order $n$, then $|E(G)|=(k+1)(n-1)-1$.
\end{conj}

The following theorem confirms Conjecture~\ref{extremal size} for the case $k=1$.	
   \begin{thm}
	Let $G$ be a $\tau_1$-maximal graph of order $n \geq 2$. Then $|E(G)|=2n-3$.
    \end{thm}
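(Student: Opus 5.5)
We prove the statement by induction on $n$, with the upper bound $|E(G)|\le 2n-3$ supplied by Theorem~\ref{upper bound}, so only the lower bound $|E(G)|\ge 2n-3$ requires work.

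\emph{Small cases.} For $n\le 3$ we have $n\le 2k+1$, and a $\tau_1$-maximal graph must then be complete. Indeed, any proper spanning subgraph $G$ of $K_n$ still satisfies $\bar{\tau}(G+e)\le\bar{\tau}(K_n)\le 1$, so it is not maximal. Hence $G=K_n$, and $|E(K_2)|=1$ and $|E(K_3)|=3$ both equal $2n-3$.

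\emph{Inductive step.} Now let $n\ge 4$ and assume the result for all smaller orders. We first show $G$ is connected. If $G$ were disconnected, we could add an edge $e$ between two components. Any subgraph $H$ of $G+e$ with $\tau(H)\ge 2$ must use $e$, because $\bar{\tau}(G)\le 1$. But then $e$ is a bridge of $H$, and a bridge lies in every spanning tree, so two edge-disjoint spanning trees cannot both contain it. This contradicts $\tau_1$-maximality. Since $n\ge 4=2k+2$, Lemma~\ref{edge connectivity} now gives $\kappa'(G)\in\{2,3\}$, and we treat the two values separately.

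\emph{Case $\kappa'(G)=2$.} Apply Lemma~\ref{edge conn-2} to a minimum edge cut $F$. It gives a vertex $u$ with $E_G(u)=F$, so $d_G(u)=2$, and shows that $G-u$ is $\tau_1$-maximal of order $n-1\ge 3$. By induction, $|E(G-u)|=2(n-1)-3$, so
\[
|E(G)|=2(n-1)-3+2=2n-3 .
\]

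\emph{Case $\kappa'(G)=3$.} Lemma~\ref{edge conn-3} provides a vertex $u$ with $d_G(u)=3$ and an edge $e_0\in E(G_2^c)$ such that $G_2+e_0$ is $\tau_1$-maximal, where $G_2=G-u$. This graph has order $n-1\ge 3$, so induction gives $|E(G_2+e_0)|=2(n-1)-3$. Removing $e_0$ and restoring the three edges at $u$ yields
\[
|E(G)|=2(n-1)-3-1+3=2n-3 .
\]

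The structural lemmas already carry the difficult part, so the proof is essentially bookkeeping. The points that need care are the following.
\begin{itemize}
\item The small cases use that a non-complete graph cannot be maximal when $n\le 2k+1$.
\item The induction is applied at order $n-1\ge 3$, which the lemmas' hypotheses $n\ge 3$ and $n\ge 4$ permit.
\item Connectivity is genuinely needed to invoke Lemma~\ref{edge connectivity}.
\end{itemize}
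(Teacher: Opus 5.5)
Your proof is correct and follows the paper's argument. Both use induction on $n$ with the upper bound from Theorem~\ref{upper bound}, split on $\kappa'(G)\in\{2,3\}$, and do the same edge count via Lemma~\ref{edge conn-2} and Lemma~\ref{edge conn-3}. Your separate treatment of $n\le 3$ is slightly more careful than the paper, which invokes Lemma~\ref{edge connectivity} already at $n=3$ although that lemma assumes $n\ge 2k+2=4$. Your connectivity step is harmless but redundant, since the proof of Lemma~\ref{edge connectivity} already covers $\kappa'(G)=0$.
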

	\begin{proof}
		By Theorem~\ref{upper bound}, we have $|E(G)|\leq 2n-3$.
		
		Now we prove that $|E(G)|\geq 2n-3$. By induction on $n$. If $n=2$, then $G \cong K_2$ and $|E(G)|=1=2\times 2-3$.
		
		Now assume that $n\geq 3$ and the result holds for all $\tau_1$-maximal graphs of order less than $n$. By Lemma~\ref{edge connectivity}, we have $2\le\kappa'(G)\le 3$. We consider the following two cases based on the edge connectivity of $G$.
		
		\noindent{\bf Case 1.} $\kappa'(G)=2$.
		
		By Lemma~\ref{edge conn-2}, there exists an edge cut $F$ with $|F|=2$ such that $G-F$ contains exactly two components $G_1$ and $G_2$. Without loss of generality, assume that $|V(G_1)|\leq |V(G_2)|$. Then by Lemma~\ref{edge conn-2}, $G_1$ is isomorphic to $K_1$ and $G_2$ is $\tau_1$-maximal. By the induction hypothesis, $|E(G_2)|\geq 2|V(G_2)|-3=2(n-1)-3$. Therefore,	
	\begin{align*}
		|E(G)|&=|E(G_1)|+|E(G_2)|+|F|\\
		& \geq 2(n-1)-3+2\\
		& =2n-3.
	\end{align*}
	
	\noindent{\bf Case 2.} $\kappa'(G)=3$.
	
	By Lemma~\ref{edge conn-3}, there exists a vertex $u$ with $d_G(u)=3$. Then $F'=E_G(u)$ is a minimum edge cut of $G$, and $G-F'$ has exactly two components $G_1'=G[\{u\}]$ and $G_2'=G-u$. By Lemma~\ref{edge conn-3}, there exists an edge $e_0\in E((G_2')^c)$ such that $G_2'+e_0$ is $\tau_1$-maximal. Applying the induction hypothesis to $G_2'+e_0$ yields
	$|E(G_2'+e_0)| \geq 2(n-1)-3$. Consequently, $|E(G_2')| \geq 2(n-1)-4$. Thus
	\begin{align*}
		|E(G)|&=|E(G_1')|+|E(G_2')|+|F'|\\
		& \geq 2(n-1)-4+3\\
		& =2n-3.
	\end{align*}
	 In both cases, we have $|E(G)|\ge 2n-3$.  Together with the upper bound $|E(G)|\le 2n-3$, we conclude
	$|E(G)| = 2n-3$ for all $n\geq 2$.
	\end{proof}

\end{document}